\documentclass[11pt]{amsart}
\allowdisplaybreaks

\textheight 205 true mm \textwidth  150 true mm \oddsidemargin
2.5true mm \evensidemargin 2.5 true mm

\usepackage{amsfonts}
\usepackage{amsmath}
\usepackage{amssymb}
\usepackage{amsthm}
\usepackage{graphicx} \usepackage{enumerate} \usepackage{multicol}
\usepackage{mathrsfs} \usepackage[all,cmtip]{xy}
\usepackage{enumerate}
\usepackage{xcolor}

\usepackage{mathtools}
\mathtoolsset{showonlyrefs}

\usepackage{color}
\newcommand\blue[1]{\textcolor{blue}{#1}}
\newcommand\red[1]{\textcolor{red}{#1}}
\newcommand\green[1]{\textcolor{green}{#1}}

\usepackage[colorlinks,linkcolor=blue,citecolor=blue,pagebackref,hypertexnames=false, breaklinks]{hyperref}
\newtheorem{theorem}{Theorem}[section]

\newtheorem{proposition}[theorem]{Proposition}

\newtheorem{corollary}[theorem]{Corollary}
\newtheorem{lemma}[theorem]{Lemma}
\newtheorem{remark}[theorem]{Remark}
\newtheorem{example}[theorem]{Example}
\newtheorem{examples}[theorem]{Examples}
\newtheorem{foo}[theorem]{Remarks}

\newcommand\transpose{%
  {\mathchoice
    {\raisebox{.45ex}{$\displaystyle{\intercal}$}}
    {\raisebox{.45ex}{$\textstyle{\intercal}$}}
    {\raisebox{.30ex}{$\scriptstyle{\intercal}$}}  
    {\raisebox{.23ex}{$\scriptscriptstyle{\intercal}$}}}
  }

\newcommand{\calE}{\mathcal{E}}
\newcommand{\calF}{\mathcal{F}}
\newcommand{\calH}{\mathcal{H}}
\newcommand{\calL}{\mathcal{L}}
\newcommand{\Ent}{\textrm{Ent}}
\newcommand{\bH}{\mathbb{H}}
\newcommand{\M}{\mathbb{M}}
\newcommand{\bM}{\mathbb{M}}
\newcommand{\bR}{\mathbb{R}}
\newcommand{\calO}{\mathcal{O}}
\newcommand{\calV}{\mathcal{V}}
\newcommand{\Dh}{\Delta_\mathcal{H}}
\newcommand{\Dhe}{\Delta_{\mathcal{H},\varepsilon}}
\newcommand{\calR}{\mathcal{R}}
\newcommand{\dR}{\mathbb{R}}
\newcommand{\cP}{\mathcal{P}}
\newcommand{\dN}{\mathbb{N}}
\newcommand{\R}{\mathbb{R}}
\newcommand{\V}{\mathcal{V}}
\newcommand{\ch}{\mathcal{H}}
\newcommand{\ee}{\ell}
\newcommand{\J}{\mathfrak J}

\newcommand{\G}{G}
\newcommand{\Ga}{\Gamma}
\newcommand{\He}{\mathbb H}

\newcommand{\DV}{D_{Vill}}
\newcommand{\al}{\alpha}
\newcommand{\be}{\beta}
\newcommand{\ld}{\lambda}
\newcommand{\ve}{\varepsilon}
\newcommand{\ga}{\gamma}
\newcommand{\f}{e^{\lambda f}}
\newcommand{\pl}{e^{\lambda \psi}}

\newcommand{\Var}{\textrm{Var}}

\newcommand{\rmB}{\mathrm{B}}

\newcommand{\scrA}{\mathscr{A}}

\DeclareMathOperator{\Ann}{Ann}
\DeclareMathOperator{\Sym}{Sym}
\DeclareMathOperator{\End}{End}
\DeclareMathOperator{\id}{id}
\DeclareMathOperator{\inc}{inc}
\DeclareMathOperator{\pr}{pr}
\DeclareMathOperator{\rank}{rank}
\DeclareMathOperator{\spn}{span}
\DeclareMathOperator{\image}{image}
\DeclareMathOperator{\tr}{tr}

\DeclareMathOperator{\Ad}{Ad}
\DeclareMathOperator{\ad}{ad}
\DeclareMathOperator{\GL}{GL}
\DeclareMathOperator{\gl}{\mathfrak{gl}}
\DeclareMathOperator{\SO}{SO}
\DeclareMathOperator{\SU}{SU}
\DeclareMathOperator{\so}{\mathfrak{so}}
\DeclareMathOperator{\Ort}{O}
\DeclareMathOperator{\ort}{\mathfrak{o}}
\DeclareMathOperator{\su}{\mathfrak{su}}
\DeclareMathOperator{\Spin}{Spin}
\DeclareMathOperator{\Aut}{Aut}
\DeclareMathOperator{\Der}{Der}

\newcommand{\eps}{\varepsilon}
\DeclareMathOperator{\Lie}{Lie}
\DeclareMathOperator{\dil}{\delta}

\DeclareMathOperator{\Ric}{Ric}
\DeclareMathOperator{\Hol}{Hol}
\DeclareMathOperator{\Hess}{Hess}
\DeclareMathOperator{\ptr}{/\! \! /}

\newcommand\blank{{\kern.8pt\cdot\kern.8pt}}

\renewcommand{\theequation}{\arabic{equation}}

\title[Ornstein-Uhlenbeck]{Convergence to equilibrium for hypoelliptic non-symmetric Ornstein-Uhlenbeck type operators}
\author{Fabrice Baudoin, Michel Bonnefont, Li Chen}

\thanks{F.B. is supported in part by NSF Grant  DMS-1901315 and the Simons Foundation. M.B. is partly supported by the ANR project RAGE ANR-18-CE40-0012.}


\date{\today}

\begin{document}

\begin{abstract}
We study a generalized curvature dimension inequality which is suitable for subelliptic  Ornstein-Uhlenbeck type operators and deduce convergence to equilibrium in the $L^2$ and  entropic sense. The main difficulty is that the operators we consider may not be symmetric. Our results apply in particular to Ornstein-Uhlenbeck operators on two-step Carnot groups. 
 \end{abstract}

\maketitle

\tableofcontents

\section{Introduction}
In order to motivate the present  work  and before turning to the sub-elliptic situation,  we recall   well known facts for the corresponding  elliptic operators. Given a Riemannian manifold $ \M$ with metric $g$  and equipped with a smooth measure $d\mu= e^{-V} d\, \mathrm{vol}$, one can naturally consider the following diffusion operator:
\[
\Delta_g f - \nabla_g V \cdot \nabla_g f,
\]
for $f\in  C_0^\infty (\M)$,  where $\Delta_g$ denotes the Laplace-Betrami operator. 
This operator  is symmetric with respect to the measure $\mu$ and one can construct the associated semi-group $P_t$.  We refer to e.g. \cite{BGL} for more details.

Among them,  the most typical example may be  the Ornstein-Uhlenbeck semigroup on $ \R^n$  which   admits  the standard Gaussian measure as invariant and reversible measure  semigroup. Its generator thus  reads as  $\Delta f -x \cdot \nabla f$.

In this setting, an important question is to study the long time behaviour of  the semigroup, and if it is the case to study its convergence to equilibrium.

In the Riemannian case, an important answer was given by Bakry and Emery \cite{BE} with the introduction of  their famous curvature-dimension criterion $CD(\rho,\infty)$. This criterion  reads shortly $''\Gamma_2\geq \rho \Gamma''$;    where $\Gamma$,  the carr\'e du champ operator,  and $\Gamma_2$ , its  iteration, can be canonically defined from the diffusion opertor $L$  (see Section \ref{sec:framework} for precise definitions).  

Now if the criterion $CD(\rho,\infty)$ is satisfied with $\rho >0$, both the usual Poincar\'e inequality and the Log-Sobolev inequalities are satisfied which respectively gives an exponential convergence to equilibrium in $L^2$ and in entropy for the semi-group. 

In the context of weighted Riemannian manifold, the $CD(\rho,\infty)$  criterion is satisfied if and only if $Ric + \nabla \nabla V \geq \rho$.

In the case of the classical Ornstein-Uhlenbeck, the Bakry-\'Emery calculus actually provides the optimal Poincar\'e and Log-Sobolev inequality, and thus the optimal rates of convergence.

The goal of the present work is to investigate similar results in the subelliptic situation. 
In this situation, for example the seminal  example of the sub-Laplacian on the  Heisenberg group, the Bakry-Emery criterion fails. However, recently the first author with Nicola Garofalo introduced and studied systematically  a generalized curvature criterion adapted to the sub-Riemannian setting \cite{BG}. They work  mainly in  the unweighted case and consider mainly sub-Laplacians  $\Delta_\mathcal{H}$

The present work focusses on the study of operators of the form: $\Delta_\mathcal{H} -X$ where  $\Delta_\mathcal{H}$ is a sub-Laplacian and $X$  a smooth vector field.
One difficulty that arises in this context is that one may have naturally to consider non-symmetric operators.

Indeed, in the case of the Heisenberg group, it is natural to consider the heat kernel  $p_t$ as the generalization of the Gaussian measure. 
As in the case of the Riemannian weighted manifold, and as it was done in \cite{BHT}, one can consider the symmetric operator:
\[
\Delta_\mathcal{H}  - \nabla_\mathcal{H}  \ln p_1 \cdot \nabla_\mathcal{H}.
\]

But since, the heat kernel is only known from an oscillatory integral, the Gaveau formula,   it does not behave well with respect of curvature-dimension  criterion.    In some sense, it is more natural to consider  another operator  $\Delta_\mathcal{H}  - 2D$ where $D$ is the dilation vector field, which still admits the heat kernel $p_1$ for invariant measure but which is not symmetric. This operator is called  the Ornstein-Uhlenbeck operator on the Heisenberg group.  It has already been studied in \cite{BHT,LP}.  Actually, such an operator is well defined on all Carnot groups.

We introduce a class of sub-elliptic diffusions  which encompasses the case  of these  Ornstein-Uhlenbeck semigroups on Carnot groups of rank 2,  and which satisfies  a  generalized curvature criterion holding under natural geometric assumptions.  We then  investigate the long time behaviour of their associated semi-group.

For this, we consider the setting of sub-Laplacians on  totally geodesic foliated Riemmanian manifolds with a drift whose horizontal part is basic (see Section \ref{sec:foliation} for more details).  With natural assumption,  the generalized curvature criterion holds and we establish  exponential  $L^2$   and entropic convergences.  As said before, some  difficulties arise since the operators we consider are not symmetric. We only obtain modified Poincar\'e and Log-Sobolev inequalities with an elliptic gradient in the energy.

Some comments must be done. First, in the   specific case  of   Ornstein-Uhlenbeck semi-groups on Carnot groups, the semi-group   $Q_t$ can be  expressed by the Mehler formula   as a time change of the heat semigroup $P_t$ associated to the canonical sub-Laplacian by the formula
\[
Q_t(f) (x)= P_{1-e^{-2t}}(f)\left(\dil_{e^{-t} } x \right)
\] 
 where $\dil$ is the dilation on the Carnot group. Another way to proceed is thus to study the symmetric heat semi-group  $P_t$ through the generalized curvature-dimension criterion  for the sub-Laplacian of the Carnot group as done in \cite{BG} and to transfer the results with the Melher's formula.  This  would lead to the same results as the ones  obtained here in this specific situation.

Secondly,  in the case of the Heisenberg group or  more generally in the particular   case of  Carnot groups of type $H$, the usual log-Sobolev  for the heat kernel is known \cite{HQLi,BBBC,HZ}.   For the moment, with the generalized curvature-dimension criterion, we are unable
to reach such a result.

Finally, we mention the work \cite{BJFA17} where some different hypoelliptic operators arising from the kinetic Fokker-Planck equations are treated.

The paper is organized as follows. First in Section  \ref{sec:framework}, we present the framework and the hypothesis under which we work.  We introduce the geometric setting  and  present examples  of diffusion operators satisfying the desired curvature  condition with a focus on the examples of Ornstein-Uhlenbeck semigroups on Carnot groups.  Section \ref{sec:L2} is devoted to the convergence in $L^2$ of the semi-groups whereas the final section treats the more difficult case of the entropic convergence.

%
%
%
%
%
%

\section{Framework and Examples}\label{sec:framework}

\subsection{Framework}

Let $\M$ be a smooth connected finite dimensional manifold endowed with a smooth probability measure $\mu$. 
Let $L$ be a locally subelliptic second-order diffusion operator on $\M$ and assume that $\mu$ is an invariant measure of $L$, i.e. for every $f \in C_0^\infty(\M)$
\[
\int_\M Lf d\mu=0.
\]

 We indicate by $\Gamma (f):=\Gamma(f,f)$  the associated carr\'e du champ defined by
\[
\Gamma(f,g)=\frac{1}{2} ( L (fg) -gLf -fL g),\quad \forall f,g\in C^{\infty}(\M)
\]
In addition, we assume that $\M$ is endowed with another smooth symmetric bilinear differential form, denoted by $\Gamma^Z$, satisfying for $f,g, h \in C^\infty(\M)$,
\[
\Gamma^Z (fg,h)=g\Gamma^Z (f,h)+f\Gamma^Z (g,h)
\]
and
\[\Gamma^Z(f,f)\geq 0.\]
We assume that  
\[
d(x,y)=\sup\{|f(x)-f(y)|: f\in C^{\infty}(\M), \, \|\Gamma(f)\|_{\infty}\le 1\}, \quad \forall x,y\in \M.
\]
is a genuine distance.
In a similar manner, for any $\ve >0$, we define $d_{\ve}$ to be the distance associated to the operator $\Gamma_{\ve}:=\Gamma+\ve \Gamma^Z$.

Now consider the iterations of $\Gamma$ and $\Gamma^Z$ which are defined by
\[
\Gamma_2(f,g)=\frac{1}{2} ( L(\Gamma(f,g)) -\Gamma(g,L f)-\Gamma(f,L g)),
\]
\[
\Gamma^Z_2(f,g)=\frac{1}{2} ( L (\Gamma^Z(f,g)) -\Gamma^Z(g,L f)-\Gamma^Z(f,L g)).
\]

Throughout the paper, we make the following assumptions.
\begin{itemize}
\item[({\bf A1})] There exists a nice Lyapounov function $W\ge 1$ such that $\Gamma(W)+\Gamma^Z(W)\le C W^2$, $LW\le CW$,
and  $\{W\le m\}$ is compact for every $m$.

\item[({\bf A2})]  
For any $f \in C^\infty(\bM)$ one has
\[
\Gamma(f, \Gamma^Z(f))=\Gamma^Z( f, \Gamma(f)).
\] 

\item[({\bf A3})] The following generalized curvature dimension condition holds: there exist $\rho_1,\rho_2,\rho_3,\kappa >0$ such that
\[
\Gamma_2(f) +\ve \Gamma_2^Z (f) \ge \left( \rho_1-\frac{\kappa}{\ve}\right) \Gamma(f) + \left(\rho_2 +\rho_3 \ve \right) \Gamma^Z(f)
\]
for every $f\in C^{\infty}(M)$ and $\ve>0$.
 \end{itemize}

Under this assumption ({\bf A1}), the Markov semigroup $(Q_t)_{t\ge 0}$ with generator $L$ uniquely solves the heat equation in $L^{\infty}$. Moreover, consider a smooth function $h: \mathbb R_{\ge 0} \to \mathbb R$ such that $h=1$ on $[0,1]$ and $h=0$  on $[2,\infty)$. Denote $h_n=h\left(\frac{W}{n}\right)$ and consider the compactly supported diffusion operator $L_n=h_n^2L$. Since $L_n$ is compactly supported, a Markov semigroup $Q_t^n$ can be constructed as the unique bounded solution of  $\frac{\partial Q_t^n f}{\partial t}=L_n Q_t^n f$, with  $Q^n_0f=f\in L^{\infty}$. Then for every bounded function $f$, we have  $Q_t^n f \to Q_t f$, as $n\to \infty$. We refer to \cite{W12} for more details. Under the assumptions  ({\bf A1}) and  ({\bf A3}) we have the following basic result:

\begin{lemma}\label{invariance semigroup}
For every $f \in C^\infty_0(\M)$ and $t \ge 0$,
\[
\int_\M Q_t f d\mu =\int_\M f d\mu.
\]
\end{lemma}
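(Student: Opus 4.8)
The plan is to prove the invariance of $\mu$ under $Q_t$ by first establishing it for the compactly supported truncations $L_n = h_n^2 L$ and then passing to the limit $n \to \infty$. The key point is that the invariance statement $\int_\M Lf\, d\mu = 0$ is assumed only for $f \in C_0^\infty(\M)$, so for a general bounded $f$ we cannot directly differentiate $t \mapsto \int_\M Q_t f\, d\mu$ and conclude. Instead, I would fix $f \in C_0^\infty(\M)$, consider the function $\phi_n(t) := \int_\M Q_t^n f\, d\mu$, and compute $\phi_n'(t) = \int_\M L_n Q_t^n f\, d\mu = \int_\M h_n^2 L (Q_t^n f)\, d\mu$. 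The difficulty is that $h_n^2 L(Q_t^n f)$ need not be of the form $L(\text{compactly supported})$, so I cannot immediately say this integral vanishes either.

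The cleaner route is to test against a cutoff. Let $g \in C_0^\infty(\M)$ and write $\frac{d}{dt}\int_\M (Q_t^n f)\, g\, d\mu = \int_\M (L_n Q_t^n f)\, g\, d\mu = \int_\M (Q_t^n f)\, L_n^* g\, d\mu$, where $L_n^*$ is the adjoint of $L_n$ with respect to $\mu$; since $L_n = h_n^2 L$ is compactly supported and $\mu$ is $L$-invariant, $L_n^* g$ is a compactly supported function and integrates against $Q_t^n f$ in a controlled way. Choosing $g = h_m$ (the same type of cutoff, with $m$ large) and using assumption ({\bf A1}) — specifically $\Gamma(W) + \Gamma^Z(W) \le CW^2$ and $LW \le CW$, which control $\Gamma(h_m)$ and $L h_m$ on the annulus $\{m \le W \le 2m\}$ — one shows that $\int_\M (Q_t^n f)\, L_n^* h_m\, d\mu \to 0$ as $m \to \infty$, uniformly in $n$ and locally uniformly in $t$, because $Q_t^n f$ is bounded and $L_n^* h_m$ is supported where $W \ge m$ with integrable-in-$m$ mass. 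Hence $\int_\M Q_t^n f\, d\mu = \int_\M f\, d\mu$ for all $t$ and all $n$.

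Finally I would pass to the limit: by the convergence $Q_t^n f \to Q_t f$ (stated in the excerpt, from \cite{W12}) together with the uniform bound $\|Q_t^n f\|_\infty \le \|f\|_\infty$ and dominated convergence against the finite measure $\mu$, we get $\int_\M Q_t f\, d\mu = \lim_n \int_\M Q_t^n f\, d\mu = \int_\M f\, d\mu$. The main obstacle is the uniform (in $n$ and $t$) control of the boundary term $\int (Q_t^n f)\, L_n^* h_m\, d\mu$ as $m \to \infty$; this is exactly where ({\bf A1}) is used, via a standard Lyapunov/cutoff argument bounding $|L h_m|$ and $\Gamma(h_m, Q_t^n f)$ on $\{W \ge m\}$ and invoking that $\{W \le m\}$ exhausts $\M$. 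Everything else is routine manipulation of the semigroups and their generators.
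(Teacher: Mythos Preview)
Your argument has a genuine gap at the step where you claim $\int_\M (Q_t^n f)\, L_n^* h_m\, d\mu \to 0$ as $m \to \infty$. Recall $L_n = h_n^2 L$, so for any test function $g$ one has $L_n^* g = L^*(h_n^2 g)$ (where $L^*$ is the $\mu$-adjoint of $L$). For $m > 2n$ the cutoff $h_m$ equals $1$ on the support of $h_n^2$, hence $h_n^2 h_m = h_n^2$ and therefore $L_n^* h_m = L^*(h_n^2)$, which is supported in the annulus $\{n \le W \le 2n\}$ and is \emph{independent of $m$}. So the quantity $\int_\M (Q_t^n f)\, L_n^* h_m\, d\mu$ does not tend to $0$ as $m\to\infty$; it is constant for large $m$. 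In fact $\mu$ is invariant for $L$, not for $L_n$, and there is no reason for $\int_\M Q_t^n f\, d\mu$ to be conserved in $t$: the modified drift of $L_n$ on $\{n\le W\le 2n\}$ can move $\mu$-mass in or out. Your final limiting step is therefore built on a false intermediate statement.

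The paper's proof proceeds differently and avoids the truncated semigroups altogether. One tests $Q_t f - f$ directly against the single cutoff $h_n$ and writes, via the product rule and the $L$-invariance of $\mu$ applied to the compactly supported function $(Q_s f)h_n$,
\[
\int_\M (Q_t f - f)\,h_n\, d\mu = -\int_0^t \int_\M Q_s f\, L h_n\, d\mu\, ds - 2\int_0^t \int_\M \Gamma(Q_s f, h_n)\, d\mu\, ds.
\]
The first term on the right is handled by (\textbf{A1}) alone. The crucial point, which your plan lacks, is the control of the cross term $\Gamma(Q_s f, h_n)$: by Cauchy--Schwarz one needs a uniform bound on $\sqrt{\Gamma(Q_s f)}$, and this is supplied by the pointwise gradient estimate of Proposition~\ref{prop:dec-gammaZ-exp}, which in turn relies on the curvature assumption (\textbf{A3}). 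Without invoking (\textbf{A3}) there is no a priori bound on $\Gamma(Q_s f)$ and the boundary term cannot be shown to vanish as $n\to\infty$.
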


\begin{proof}
As above, consider a smooth function $h: \mathbb R_{\ge 0} \to \mathbb R$ such that $h=1$ on $[0,1]$ and $h=0$  on $[2,\infty)$ and denote $h_n=h\left(\frac{W}{n}\right)$. One has
\begin{align*}
\int_\M (Q_t f -f )h_n d\mu&=\int_0^t \int_\M (LQ_s f) h_n d\mu ds \\
 &=\int_0^t \int_\M \left(L(Q_s f h_n)-Q_sf Lh_n-2\Gamma(Q_sf,h_n) \right)d\mu ds \\
 &=-\int_0^t \int_\M Q_sf Lh_nd\mu ds -2\int_0^t \int_\M\Gamma(Q_sf,h_n) d\mu ds.
\end{align*}
The term $\int_0^t \int_\M Q_sf Lh_nd\mu ds$ goes to 0 when $n \to +\infty$ from the definition of $h_n$ and the assumption ({\bf A1}) on $W$. For the second term, one observes that $| \Gamma(Q_sf,h_n)| \le \sqrt{\Gamma(Q_sf )} \sqrt{\Gamma(h_n )}$. The quantity $\sqrt{\Gamma(Q_sf )}$ can be controlled using Proposition \ref{prop:dec-gammaZ-exp} below (this relies on  ({\bf A3})) and $ \sqrt{\Gamma(h_n )}$ goes to 0 from the definition of $h_n$ and the assumption ({\bf A1}) on $W$. One concludes
\[
\lim_{n \to +\infty} \int_\M (Q_t f -f )h_n d\mu=0.
\]
Using dominated convergence, this implies
\[
\int_\M Q_t f d\mu =\int_\M f d\mu.
\]
\end{proof}

A corollary which is useful  is the following.

\begin{lemma}\label{lem:phi}
Let $\varphi:\R \to \R$ (or  $\R_+ \to \R$) be convex  such that $\varphi(0)=0$.  Then for any $f\in C_0^{\infty}(\M)$  \begin{equation}\label{e:phi}
 \int_\M   \varphi(Q_t f) d\mu \leq \int_\M   \varphi( f) d\mu.
 \end{equation}
 In particular,  the energy $\int_\M (Q_t f)^2 d\mu$ and the entropy $\int_\M Q_t f \ln (Q_t f)d\mu$ are non increasing in $t\geq 0$.
\end{lemma}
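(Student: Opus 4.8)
The plan is to deduce \eqref{e:phi} from Jensen's inequality for the Markov kernels of $(Q_t)_{t\ge 0}$ combined with the invariance of $\mu$ from Lemma~\ref{invariance semigroup}. Since $(Q_t)_{t\ge 0}$ is a Markov semigroup under ({\bf A1}), for every $t\ge 0$ and $x\in\M$ one has $Q_t f(x)=\int_\M f\,dQ_t(x,\blank)$ for a probability measure $Q_t(x,\blank)$, and $Q_t$ is positivity preserving, so in the case $\varphi:\R_+\to\R$ the quantity $\varphi(Q_t f)$ is well defined since $Q_t f\ge 0$. As $f$ is bounded and $\varphi$ is convex (hence continuous on its domain), Jensen's inequality yields the pointwise bound
\[
\varphi\big(Q_t f(x)\big)=\varphi\Big(\int_\M f\,dQ_t(x,\blank)\Big)\ \le\ \int_\M\varphi(f)\,dQ_t(x,\blank)=Q_t\big(\varphi(f)\big)(x).
\]
Integrating against the probability measure $\mu$, it then suffices to check that $\int_\M Q_t(\varphi(f))\,d\mu=\int_\M\varphi(f)\,d\mu$.

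This identity is the only delicate point. When $\varphi$ is smooth one has $\varphi(f)\in C_0^\infty(\M)$: it is smooth, and $\varphi(0)=0$ forces $\{\varphi(f)\neq 0\}\subseteq\{f\neq 0\}$ so that $\varphi(f)$ has compact support; hence Lemma~\ref{invariance semigroup} applies directly. For a general convex $\varphi$ I would reduce to this case: a standard mollification (slightly adapted near $0$ in the $\R_+$ case), followed by subtraction of the value at the origin, produces smooth convex functions $\varphi_k$ with $\varphi_k(0)=0$ converging to $\varphi$ uniformly on the compact set $f(\M)$. Then $\varphi_k(f)\in C_0^\infty(\M)$ and the previous step gives $\int_\M\varphi_k(Q_t f)\,d\mu\le\int_\M\varphi_k(f)\,d\mu$; since $\|\varphi_k(Q_t f)-\varphi(Q_t f)\|_\infty\to 0$, $\|\varphi_k(f)-\varphi(f)\|_\infty\to 0$, and $\mu$ is finite, letting $k\to\infty$ gives \eqref{e:phi}.

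The ``in particular'' assertions are the cases $\varphi(x)=x^2$ and, for $f\ge 0$ with the convention $0\ln 0=0$, $\varphi(x)=x\ln x$ on $\R_+$, both convex with $\varphi(0)=0$. To turn \eqref{e:phi} into genuine monotonicity of $t\mapsto\int_\M\varphi(Q_t f)\,d\mu$, one can rerun the argument with $Q_s f$ in place of $f$ and use $Q_{t+s}f=Q_t(Q_s f)$; this requires the invariance identity for $g=\varphi(Q_s f)$, which is smooth by hypoellipticity but no longer compactly supported, and is obtained exactly as in the proof of Lemma~\ref{invariance semigroup} by inserting the cutoffs $h_n=h(W/n)$ and killing the boundary terms $\int_\M Q_s g\,Lh_n\,d\mu$ and $\int_\M\Gamma(Q_s g,h_n)\,d\mu$ with ({\bf A1}) and the gradient estimate of Proposition~\ref{prop:dec-gammaZ-exp}. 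Equivalently, writing $u_t=Q_t f$, the diffusion chain rule $L\varphi(u_t)=\varphi'(u_t)Lu_t+\varphi''(u_t)\Gamma(u_t)$ together with $\int_\M L\varphi(u_t)\,d\mu=0$ gives
\[
\frac{d}{dt}\int_\M\varphi(u_t)\,d\mu=\int_\M\varphi'(u_t)\,Lu_t\,d\mu=-\int_\M\varphi''(u_t)\,\Gamma(u_t)\,d\mu\ \le\ 0,
\]
since $\varphi''\ge 0$ and $\Gamma\ge 0$. In both approaches the only real obstacle is to justify $\int_\M L(\,\cdot\,)\,d\mu=0$ (and differentiation under the integral sign) for functions that are not compactly supported, which is handled precisely by the truncation scheme and decay estimates already used for Lemma~\ref{invariance semigroup}; no new ingredient is needed.
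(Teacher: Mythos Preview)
Your proof is correct and follows exactly the paper's approach: apply Jensen's inequality pointwise to obtain $\varphi(Q_t f)\le Q_t\varphi(f)$, then integrate against $\mu$ and invoke Lemma~\ref{invariance semigroup}. You are simply more careful than the paper in justifying that Lemma~\ref{invariance semigroup} applies to $\varphi(f)$ (via $\varphi(0)=0$ for compact support, and mollification for non-smooth $\varphi$), and your alternative differentiation argument at the end is an equally valid route.
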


\begin{proof}
Since $\varphi$ is convex, from Jensen's inequality one has
\[
\varphi(Q_t f) \le Q_t\varphi( f).
\]
The result follows then from Lemma \ref{invariance semigroup}.
\end{proof}


\subsection{Examples of Ornstein-Uhlenbeck type operators}
In this section, we  study examples of Ornstein-Uhlenbeck operators which satisfy the assumptions of our general framework.

\subsubsection{Heisenberg group}

The motivating and basic example to which our results apply  is the Ornstein-Uhlenbeck operator on the Heisenberg group, as defined in \cite{LP}.

The Heisenberg group is the set 
\[
\bH=\{(x,y,z): x\in \bR, y\in \bR, z\in \bR\}
\]
 endowed with the group law
\[
(x,y,z) \cdot (x',y',z')=\left(x+x',y+y',z+z'+\frac12(xy'-yx')\right).
\]
Consider the left-invariant vector fields: 
\[
X=\partial_{x}-\frac{y}{2}\partial_z,\quad Y=\partial_{y}+\frac{x}{2}\partial_z, \quad Z=\partial_z,
\]
and the Ornstein-Uhlenbeck type  operator: 
$$
L= X^2+Y^2 -2 D =\Dh-2 D,
$$
where $D$ is  the dilation vector field
$$
 D=\frac{1}{2}x \partial_x +\frac{1}{2} y \partial_y +z \partial_z.
$$
Notice that $L$ is not symmetric, but it admits an invariant measure $p_\frac{1}{2}$ where $p_t$ denotes the heat kernel associated to the sub-Laplacian $\Dh=X^2+Y^2$.
More generally, the operator $L=\Dh - \alpha D$ admits $p_\frac{1}{\alpha}$ as invariant measure.

Actually, $D$ can also be written as
$$
D=\frac{1}{2}xX+\frac{1}{2}yY +zZ
$$
and we have the following commutation relations, which are easy to check:
$$
[X,D]=\frac{1}{2}X ,\, [Y,D]=\frac{1}{2}Y, [Z,D]=Z.
$$
Assumption ({\bf A1}) is satisfied with $W=1+(x^2+y^2)^2 +z^2$. Assumption ({\bf A2}) is easily seen to be satisfied and the following proposition shows that  assumption ({\bf A3})  is satisfied.

\begin{proposition}\label{CD-Heisenberg}
Let $f\in C^\infty(\mathbb H)$ and $\varepsilon >0$, then
$$
\Ga_2 (f,f) + \ve \Ga_2^Z (f,f) \geq \left(1-\frac{1}{\ve}\right) \Ga(f,f) + 
\left(2\ve+ \frac{1}{2}\right)(Zf)^2.
$$
\end{proposition}
\begin{proof}
Denote by $\Ga_2^\He$ the iteration operator associated to the sub-Laplacian $\Dh=X^2+Y^2$, that is, 
\[
\Ga_2^\He(f,f)=\frac12(\Dh\Gamma(f,f)-2\Gamma(f, \Dh f)), \quad \forall f\in C^{\infty}(M).
\]
 Then direct computation yields
\begin{align*}
\Ga_2 (f,f) &= \Ga_2^\He(f,f) + X(f)[X,2D](f) + Y(f)[Y,2D](f)\\ 
      &= \Ga_2^\He(f,f) + \Ga(f,f)\\
      &= (X^2f)^2 +(Y^2f)^2 +(XYf)^2+(YXf)^2  -2 (Xf) (YZf) + 2(Yf) (XZf)   + \Ga(f,f) .
\end{align*}
From \cite[Section 2.2]{BG},  one knows that for any $\ve>0$
\[
\Ga_2^\He(f,f) \ge \frac12(Lf)^2+\frac12(Zf)^2 -\ve\Ga(Zf,Zf)-\frac1{\ve}\Gamma(f,f).
\]
Furthermore, 
\begin{align*}
\Ga_2^Z (f,f) &= \Ga(Zf,Zf) +  Zf [L,Z] (f)\\
              &=\Ga(Zf,Zf) + 2 (Zf)^2,
\end{align*}
since $[\Dh,Z]=0$ and $[Z,2D]=2Z$. Finally, we conclude the desired result by collecting the above computations.
\end{proof}

%
%
%




\subsubsection{Ornstein-Uhlenbeck operators on foliated spaces} \label{sec:foliation}

%
%

Let $\M$ be a smooth, connected, complete Riemannian manifold of dimension $n+m$ endowed with a smooth measure $\mu$. We assume that $\bM$ is equipped with a Riemannian foliation $\mathcal{F}$ with bundle like metric $g$ and totally geodesic  $m$-dimensional leaves for which the horizontal distribution is bracket generating and Yang-Mills. 

We define the horizontal gradient $\nabla_\mathcal{H} f$ of a smooth function $f$ as the projection of the Riemannian gradient of $f$ on the horizontal bundle. Similarly, we define the vertical gradient $\nabla_\mathcal{V} f$ of a function $f$ as the projection of the Riemannian gradient of $f$ on the vertical bundle.
The horizontal Laplacian $\Dh$ is the generator of the symmetric pre-Dirichlet form
\[
\mathcal{E}_{\mathcal{H}} (f,g) =\int_\bM \langle \nabla_\mathcal{H} f , \nabla_\mathcal{H} g \rangle_{\mathcal{H}} d\mu_g, \quad f,g \in C_0^\infty(\M)
\]
where $\mu_g$ is the Riemannian volume measure.   We have therefore the following integration by parts formula
\[
\int_\bM \langle \nabla_\mathcal{H} f , \nabla_\mathcal{H} g \rangle_{\mathcal{H}} d\mu_g=-\int_\M f \Dh g d\mu_g=-\int_\M g \Dh f d\mu_g, \quad f,g \in C_0^\infty(\M).
\]
From this convention $\Dh$ is therefore non positive. 

Consider now on $\M$ the following operator:
\[
L=\Dh-X,
\]
where $X$ is a smooth vector field on $\M$. We do not assume that $X$ is a horizontal vector field. However, we will assume that the horizontal part $X_\mathcal{H}$ of $X$ (i.e.,  its projection onto $\mathcal{H}$) is basic. In other words, $X_\mathcal{H}$ satisfies
\[
\nabla_v X_\mathcal{H}=0,
\]
whenever $v$ is a vertical vector. We denote here by $\nabla$ the Bott connection on $\M$ (see \cite{BKW} for further details).
In addition, we assume that $\mu$ is an invariant measure of $L$.
\

We now introduce the following operators defined for $f,g \in C^\infty(\M)$,
\[
\Gamma(f,g)=\frac{1}{2} ( L (fg) -gLf -fL g)=\langle \nabla_\mathcal{H} f , \nabla_\mathcal{H} g\rangle_\mathcal{H}, 
\]
\[
\Gamma^Z (f,g)=\langle \nabla_\mathcal{V} f , \nabla_\mathcal{V} g\rangle_\mathcal{V}.
\]
Their iterations $\Gamma_2$ and $\Gamma^Z_2$ are defined accordingly.
Observe that from \cite{B16ems, BG} one has
\[
\Gamma (f, \Gamma^Z (f))=\Gamma^Z  (f, \Gamma (f)).
\]

We then obtain the following Bochner's type inequality. The geometric  tensors  $J$,  $\mathbf{J}$ and $T$ used below are defined in Section 4.1 in \cite{B16ems}. For conciseness, we refer the reader to this reference for their definitions and basic properties.

\begin{theorem}\label{CD}
Let $f \in C^\infty(\M)$ and $\varepsilon >0$, we have
\begin{align*}
 & \Gamma_2(f,f)+\varepsilon \Gamma^Z_2(f,f) \\
&  \qquad \ge-\frac{1}{4} \mathbf{Tr}_\mathcal{H} (J^2_{\nabla_\mathcal{V} f})+ \ \mathbf{Ric}_{\mathcal{H}} (\nabla_\mathcal{H} f, \nabla_\mathcal{H} f)  +\frac{1}{\varepsilon} \langle \mathbf{J}^2 (\nabla_\mathcal{H} f) , \nabla_\mathcal{H} f \rangle_\mathcal{H} \\
  & \qquad \quad +\langle T(X_\mathcal{H} ,\nabla_\mathcal{H} f), \nabla_\mathcal{V} f \rangle +\ve \langle \nabla_{\nabla_\mathcal{V} f} X_\mathcal{V}, \nabla_\mathcal{V} f \rangle + \langle \nabla_{\nabla_\mathcal{H} f} X_\mathcal{H}, \nabla_\mathcal{H} f \rangle \\
  & \qquad\quad + \langle \nabla_{\nabla_\mathcal{H} f} X_\mathcal{V}, \nabla_\mathcal{V} f \rangle
\end{align*}
\end{theorem}

\begin{proof}
We split $\Gamma_2(f,f)+\varepsilon \Gamma^Z_2(f,f)$ into four parts as follows
\begin{align*}
 \Gamma_2(f,f)+\varepsilon \Gamma^Z_2(f,f) =& \frac12\left( \Delta_\mathcal{H} \Ga(f,f)-2\Ga(f,\Delta_\mathcal{H}f)\right)+ \frac{\varepsilon}2\left( \Delta_\mathcal{H} \Ga^Z(f,f)-2\Ga^Z(f,\Delta_\mathcal{H}f)\right)
 \\ &-\frac12\left(X \Ga(f,f)-2\Ga(f,Xf)\right)-\frac{\varepsilon}2\left(X \Ga^Z(f,f)-2\Ga^Z(f,Xf)\right)
\\ =:&I+ \varepsilon II- III- \varepsilon IV.
\end{align*}

Since the horizontal distribution is Yang-Mills,  it follows from \cite[Theorem 3.1]{BKW} that
\[
I+  \varepsilon II\ge -\frac{1}{4} \mathbf{Tr}_\mathcal{H} (J^2_{\nabla_\mathcal{V} f})+ \ \mathbf{Ric}_{\mathcal{H}} (\nabla_\mathcal{H} f, \nabla_\mathcal{H} f)  +\frac{1}{\varepsilon} \langle \mathbf{J}^2 (\nabla_\mathcal{H} f) , \nabla_\mathcal{H} f \rangle_\mathcal{H}.
\]

We compute $III$ and $IV$ by introducing a local horizontal  and vertical orthonormal frame $\{X_1, \cdots, X_n, Z_1,\cdots, Z_m\}$. In this case we have 
\begin{align*}
III&=\sum_{i=1}^n (X X_i f) X_i f- \sum_{i=1}^n (X_i X f) X_i f=\sum_{i=1}^n [X,X_i]f X_i f
\\ &=
\sum_{i=1}^n (\nabla_X X_i-\nabla_{X_i}X-T(X,X_i))f X_i f.
\end{align*}
Notice that the Bott connection is metric and the covariant derivative of horizontal vector fields is horizontal, then  
\[
\sum_{i=1}^n (\nabla_X X_i)f X_i f=\sum_{i,j=1}^n \langle \nabla_X X_i, X_j\rangle X_j f X_i f=-\sum_{i,j=1}^n \langle  X_i, \nabla_X X_j\rangle X_j f X_i f=0.
\]
Next we compute
\begin{align*}
\sum_{i=1}^n (\nabla_{X_i}X)f X_i f &= \sum_{i,j=1}^n \langle \nabla_{X_i} X_\mathcal{H}, X_j\rangle X_j f X_i f + 
\sum_{i=1}^n \sum_{k=1}^m \langle \nabla_{X_i} X_\mathcal{V}, Z_k\rangle Z_k f X_i f 
\\ &=
\langle \nabla_{\nabla_\mathcal{H} f} X_\mathcal{H}, \nabla_\mathcal{H} f\rangle+
\langle \nabla_{\nabla_\mathcal{H} f} X_\mathcal{V}, \nabla_\mathcal{V} f\rangle.
\end{align*}
Finally observe that $T(X, X_i)=T(X_\mathcal{H},X_i)$ is vertical, then 
\[
\sum_{i=1}^n T(X,X_i)f X_i f = \sum_{i=1}^n \sum_{k=1}^m \langle T(X_\mathcal{H},X_i), Z_k\rangle Z_k f X_i f 
= \langle T(X_\mathcal{H},\nabla_\mathcal{H} f), \nabla_\mathcal{V} f\rangle.
\]
Collecting the above computations, we thus obtain
\[
-III=\langle \nabla_{\nabla_\mathcal{H} f} X_\mathcal{H}, \nabla_\mathcal{H} f\rangle+
\langle \nabla_{\nabla_\mathcal{H} f} X_\mathcal{V}, \nabla_\mathcal{V} f\rangle+ \langle T(X_\mathcal{H},\nabla_\mathcal{H} f), \nabla_\mathcal{V} f\rangle.
\]

It remains to compute $IV$. Similarly as above we have
\[
IV= \sum_{k=1}^m [X,Z_k]f Z_k f=
\sum_{k=1}^m (\nabla_X Z_k-\nabla_{Z_k}X-T(X,Z_k))f Z_k f.
\]
First observe that $\sum_{k=1}^m (\nabla_X Z_k)f Z_k f=0$ since $\nabla$ is metric and and the covariant derivative of vertical vector fields is vertical. Next due to the fact that $X_\mathcal{H}$ is basic then $\sum_{k=1}^m (\nabla_{Z_k}X)f Z_k f= \langle \nabla_{\nabla_\mathcal{V} f} X_\mathcal{V}, \nabla_\mathcal{V} f \rangle$.  Finally the torsion $T(X,Z_k)$ is zero. We conclude that 
\[
-\varepsilon IV= \varepsilon \langle \nabla_{\nabla_\mathcal{V} f} X_\mathcal{V}, \nabla_\mathcal{V} f \rangle.
\]
This completes the proof.
\end{proof}


Theorem \ref{CD} can be applied to any Carnot group of step 2.

A Carnot group of step two is a simply connected Lie group $\mathbb G$ whose Lie algebra $\mathfrak g$ can be written as $\mathfrak g=V_1\oplus V_2$, where $[V_1,V_1]=V_2$ and $[V_1,V_2]=\{0\}$. Denote by $e_1, \cdots, e_n$ an orthonormal basis of $V_1$ and by $\varepsilon_1, \cdots, \varepsilon_m$ an orthonormal basis of $V_2$. Let $X_1,\cdots,X_n$ and $Z_1, \cdots, Z_m$ be the corresponding left-invariant vector fields on $\mathbb G$. Then 
\[
X_i=\frac{\partial}{\partial x_i} -\frac12\sum_{k=1}^m\sum_{j=1}^n \gamma_{ij}^k x_j Z_k,
\]
where $\gamma_{ij}^k=\langle [e_i,e_j],\varepsilon_k\rangle$ are the group constants. We also have 
\[
[X_i,X_j]=\sum_{k=1}^m \gamma_{ij}^k  Z_k.
\]

Actually, it is known  (see \cite{BLU}) that such a Carnot group is isomorphic to 
 $\dR^N=\dR^n \times \dR^m$  equipped with the group law given by 
\[
(x,z) \cdot (x',z')=\left(x+x', z+z' + \frac{1}{2} \langle Bx,x' \rangle\right),
\]
where $x,x' \in \dR^d$, $z,z'\in \dR^m$ and 
\[
\langle Bx,x' \rangle = \left(\langle B^{(1)} x,x' \rangle, \cdots, \langle B^{(m)} x,x' \rangle \right) 
\]
 for  some linearly independent skew-symmetric $d\times d$ matrices $B^{(l)}$,  $1\leq l \leq m$. 
 In this case, we clearly have: 
 \[
 [X_i,Z_k]=0  \textrm{ and } [Z_j,Z_k]= 0.
 \]

The left invariant sub-Laplacian on $\mathbb G$ is $\Delta_\mathcal H=\sum_{i=1}^n X_i^2$. The dilations $\delta_t: \mathfrak g \to \mathfrak g$, $t\ge 0$, are defined  by scalar multiplication  $t^i$ on $V_i$. The dilations $\delta_t: \mathbb G\to\mathbb G$ are defined as $\delta_t (\exp Z)=\exp \delta_t(Z)$, for any $Z\in \mathfrak g$. The generator of the one-parameter  group $(\delta_{e^s})_{s\in \mathbb R}$ can be written as $D=\sum_{i=1}^n x_iX_i+2\sum_{k=1}^m z_k Z_k$. Similarly to \cite{LP}, we consider the operator $L=\Delta_{\mathcal H}-D$.

An interesting fact for  Carnot groups is that  the Bott connection is ``trivial'' in  the orthonormal basis $(X_1,X_n,Z_1,\dots Z_m)$; i.e. for all $1 \leq i,j \leq n, 1\leq k,l \leq m$
 \[
 \nabla _{X_i} X_j=\nabla_{X_i} Z_k =\nabla_{Z_k} X_i =\nabla_{Z_k} Z_l=0.
 \]

In the present setting we have for $f,g \in C^{\infty}(\mathbb G)$,
\[
\Gamma (f,f)=\sum_{i=1}^n (X_i f)^2, \quad \Gamma^Z(f,f)=\sum_{k=1}^m (Z_k f)^2.
\]
Denote the $\Gamma_2$ operators associated with the sub-Laplacian by $\Gamma_2^{\mathcal H}$ and $\Gamma_2^{Z,\mathcal H}$. It was proved in \cite[Proposition 2.21]{BG} that 
\[
\Gamma_2^{\mathcal H}(f,f)+\varepsilon \Gamma_2^{Z,\mathcal H}(f,f) \ge \frac1n(\Delta_\mathcal H f)^2-\frac{\kappa}{\varepsilon} \Ga(f,f)+\rho_2 \Ga^Z(f,f),
\]
where 
\[
\kappa=\sup_{\|x\|=1} \sum_{j=1}^n \sum_{k=1}^m \left(\sum_{i=1}^n \ga_{ij}^k x_i\right)^2, \quad 
\rho_2=\frac14\ \inf_{\|z\|=1} \sum_{i,j=1}^n  \left(\sum_{k=1}^m \ga_{ij}^k z_k\right)^2.
\]

We now turn to the curvature  dimension criterion  satisfied by $L=\Delta_{\mathcal H}-D$.

\begin{proposition}\label{CD-Carnot}
Let $f \in C^\infty(\mathbb G)$ and $\varepsilon >0$, then
\[
\Ga_2 (f,f) + \ve \Ga_2^Z (f,f) \geq \left(1-\frac{\kappa}{\ve}\right) \Ga(f,f) + 
\left(2\ve+ \rho_2\right)\Ga^Z(f,f).
\]
\end{proposition}
\begin{proof}
Note that $X_{\mathcal H}=\sum_{i=1}^n x_iX_i$ and $X_{\mathcal V}=2\sum_{k=1}^m z_k Z_k$. We compute
\[
\langle \nabla_{\nabla_\mathcal{V} f} X_\mathcal{V}, \nabla_\mathcal{V} f \rangle
= 2\left\langle \sum_{k,l=1}^m (Z_k f) \nabla_{Z_k} (z_l Z_l), \sum_{k=1}^m(Z_k f) Z_k \right\rangle =2  \Ga^Z(f,f).
\]

\[
\langle \nabla_{\nabla_\mathcal{H} f} X_\mathcal{H}, \nabla_\mathcal{H} f \rangle
= \left\langle \sum_{i,j=1}^n (X_i f) \nabla_{X_i} (x_j X_j), \sum_{i=1}^n(X_i f) X_i \right\rangle = \Ga(f,f).
\]
We now show that :  $  T(X_\mathcal{H} ,\nabla_\mathcal{H} f), \nabla_\mathcal{V} f \rangle + \langle \nabla_{\nabla_\mathcal{H} f} X_\mathcal{V}, \nabla_\mathcal{V} f \rangle=0$. Indeed, 
\begin{align*}
\langle \nabla_{\nabla_\mathcal{H} f} X_\mathcal{V}, \nabla_\mathcal{V} f \rangle
&= 2\left\langle \sum_{i=1}^n\sum_{l=1}^m (X_i f) \nabla_{X_i} (z_l Z_l), \sum_{k=1}^m(Z_k f) Z_k \right\rangle 
\\ &=
-\sum_{k=1}^m \sum_{i,j=1}^n \gamma_{ij}^k x_j(X_i f) (Z_k f)
\end{align*}

\begin{align*}
\langle T(X_\mathcal{H} ,\nabla_\mathcal{H} f), \nabla_\mathcal{V} f \rangle
&= -\left\langle \sum_{i,j=1}^n x_i (X_j f)[X_i,X_j], \sum_{k=1}^m(Z_k f) Z_k \right\rangle 
\\ &=
-\sum_{k=1}^m \sum_{i,j=1}^n \gamma_{ij}^k x_i(X_j f) (Z_k f)=
\sum_{k=1}^m \sum_{i,j=1}^n \gamma_{ij}^k x_j(X_i f) (Z_k f).
\end{align*}
Collecting the above computations, we conclude the proof from Theorem \ref{CD}.
\end{proof}

Proposition \ref{CD-Carnot} is obtained similarly to Theorem \ref{CD}. A more direct proof, as in the Heisenberg case, is also possible, since:
\[
[X_i,D] =X_i
\] and thus
\[ [L,D]=L .
\]
%
%
%

We note that if $X=\sum_{i=1}^n a_i X_i + \sum_{k=1}^m b_k Z_k$ for some smooth functions $a_i,b_k$ on such a Carnot group, then the horizontal part of $X$ is basic if and only if for all $1\leq i \leq n, 1\leq k\leq m$,
\[
Z_k (a_i)=0;
\] 
that is, the functions $a_i$ only depends on $x_1,\dots, x_n$. In this case, we have
\[
\langle \nabla_{\nabla_\mathcal{H} f} X_\mathcal{H}, \nabla_\mathcal{H} f \rangle
= \sum_{i,j=1}^n (X_i f) ({X_i} a_j) (X_j f),
\]

\[
\langle \nabla_{\nabla_\mathcal{V} f} X_\mathcal{V}, \nabla_\mathcal{V} f \rangle
= \sum_{k,l=1}^m (Z_k f)  (Z_k b_l)(Z_l f),
\]

\begin{align*}
\langle \nabla_{\nabla_\mathcal{H} f} X_\mathcal{V}, \nabla_\mathcal{V} f \rangle
&=  \sum_{i=1}^n\sum_{l=1}^m (X_i f) ({X_i}b_l)(Z_l f)
\end{align*}
 and 
\begin{align*}
\langle T(X_\mathcal{H} ,\nabla_\mathcal{H} f), \nabla_\mathcal{V} f \rangle
&= \left\langle \sum_{i,j=1}^n a_j(X_i f)[X_i,X_j], \sum_{l=1}^m(Z_l f) Z_l \right\rangle 
= \sum_{i=1}^n\sum_{l=1}^m   (X_i f)  \left( \sum_{j=1}^n a_j \gamma_{ij}^l \right) (Z_l f).
\end{align*}
\section{Convergence in $L^2$ and Poincar\'e inequalities} \label{sec:L2}

In this section we study the $L^2$ convergence to equilibrium of the semigroup $Q_t$ under the assumptions of subsection 2.1. Our basic assumption is the following:
\begin{equation}\label{e:d2-int}
D:=\int_\M \int_\M d^2(x,y) d\mu(x) d\mu(y) <+\infty,
\end{equation}
where $d$ is the subelliptic distance associated to $\Gamma$. We will prove that under this assumption, the semigroup $Q_t=e^{tL}$ converges in $L^2$ with an explicit exponential rate of convergence.

We also denote $d_\ve$ the distance associated to the ``carr\'e du champ'' operator   $\Gamma_\ve:= \Gamma +  \ve \Gamma^Z$.
Since $d_\ve\leq d$, condition \eqref{e:d2-int} is also satisfied for $d_\ve$:
\[
D_\ve:=\int_\M \int_\M d_\ve^2(x,y) d\mu(x) d\mu(y) <+\infty.
\]
We  recall the  generalized CD condition:
\[
\Gamma_2(f) +\ve \Gamma_2^Z (f) \ge \left( \rho_1-\frac{\kappa}{\ve}\right) \Gamma(f) + \left(\rho_2 +\rho_3 \ve \right) \Gamma^Z(f).
\]

\subsection{Convergence in $L^2$}

In the sequel, for $\ve > \frac{\rho_1}{\kappa}$, we denote
\[
\lambda_\ve = \min \left\{  \rho_1-\frac{\kappa}{\ve} , \frac{\rho_2}{\ve} +\rho_3   \right\} .
\]
Therefore,
\begin{equation} \label{e:Gamma2complet}
\Gamma_2(f) +\ve \Gamma_2^Z (f) \ge \lambda_\ve  \left(  \Gamma(f) +  \ve \Gamma^Z(f) \right) .
\end{equation}
The basic inequality is the following:

\begin{proposition}\label{prop:dec-gammaZ-exp}
Let $f\in C_0^{\infty}(\M)$ and $\ve>0$. Then for $t>0$ and $x\in \M$ one has
\[
\Gamma(Q_t f)(x) +  \ve \Gamma^Z(Q_t f)(x) \le e^{-2 \lambda_\ve t}  \left( Q_t\Gamma(f)(x) +  \ve Q_t\Gamma^Z(f)(x) \right).
\]
\end{proposition}


\begin{proof}
We follow the proof in \cite[Theorem 7.3]{B16ems} (see also \cite{W12}). In order to use the standard $\Gamma$-calculus, it suffices to prove that the heat semigroup $Q_t$ maps smooth functions of compact support into  smooth Lipschitz functions.  

 Fix $t>0$ and $n\ge 1$. Let $f\in C^{\infty}(\M)$ be compactly supported in the set $\{W\le n\}$. Consider 
\[
\Phi_n(s)=Q_s^n   \left(\Gamma(Q_{t-s}^n f) +  \ve \Gamma^Z(Q_{t-s}^n f)  \right).
\]
For the sake of convenience, we denote $g=Q_{t-s} f$ and $g_n=Q_{t-s}^n f$. Then
\[
\frac{\partial}{\partial s} \Phi_n(s)=Q_s^n \left(\left(L_n \Gamma(g_n)-2 \Gamma(g_n, L_n g_n)\right)+  \ve \left(L_n \Gamma^Z(g_n) -\Gamma^Z(g_n,L_ng_n) \right)\right).
\]
From the Cauchy-Schwarz inequality, 
\begin{align*}
\Gamma(g_n, L_n g_n)
&=
h_n^2L g_n\Gamma(g_n,  \log h_n)+h_n^2\Gamma(g_n,  L g_n)
\\ &\le 
\frac{1}{2} \left( \|L f\|_{L^{\infty}}^2 \Gamma(\log h_n)+\Gamma(g_n)\right)+h_n^2\Gamma(g_n,  L g_n).
\end{align*}
Since $h_n$ is supported in the set $\{W\le 2n\}$, then
\[
\Gamma(\log h_n)=\left(\frac{1}{nh_n}h'\left(\frac Wn\right)\right)^2 \Gamma(W) \le \frac{C}{h_n^2},
\]
and hence
\[
L_n \Gamma(g_n)-2 \Gamma(g_n, L_n g_n) \ge 2h_n^2 \Gamma_2(g_n)-h_n^2 \Gamma(g_n)-C,
\]
where the constant $C$ depends on $f$ and $t$, but does not depend on $n$. Similarly we obtain
\[
L_n \Gamma^Z(g_n)-2 \Gamma^Z(g_n, L_n g_n) \ge 2h_n^2 \Gamma_2^Z(g_n)-h_n^2 \Gamma^Z(g_n)-C.
\]
By a direct computation,
\[
L_n\left(\frac1{h_n^2} \right) \le \frac{C}{h_n^2},
\]
and as a consequence
\[
Q_s^n\left(\frac1{h_n^2} \right)\le \frac{e^{Cs}}{h_n^2}.
\]
Collecting the above estimates and applying the curvature dimension condition \eqref{e:Gamma2complet}, one has
\begin{align*}
\frac{\partial}{\partial s} \Phi_n(s)&\ge Q_s^n \left(2h_n^2\left(\Gamma_2(g_n)+\ve \Gamma_2^Z(g_n)\right)-h_n^2\left(\Gamma(g_n)+\ve \Gamma^Z(g_n)\right)-(1+\ve)C \right)
\\ &\ge
(2\lambda_{\ve}-1) \Phi_n(s)-(1+\ve)C.
\end{align*}
Integrating this inequality from 0 to $t$ yields
\[
\Gamma(Q_t^n f)+\ve\Gamma^Z(Q_t^n f)\le C, 
\]
where the constant $C$ depends on $f$ and $t$ is uniform on the set $\{W\le n\}$.

Now for any $x,y \in \M$ and $f\in C_0^{\infty}(\M)$, we pick $n$ big enough such that $x,y\in \{W\le n\}$ and $\mathrm{supp} (f)\subset \{W\le n\}$. It follows from the previous estimate that 
\[
|Q_t^n f(x)-Q_t^n f(y)|\le C d(x,y).
\]
Taking the limit of $n\to \infty$, one has
\[
|Q_t f(x)-Q_t f(y)|\le C d(x,y).
\]
We conclude that $Q_t$ transfers $C_0^{\infty}(\M)$ into a subset of the set of smooth Lipschitz functions. Therefore we can 
proceed by applying the Bakry-\'Emery machinery as in \cite{B16ems,W12} to justify the computations below

Let $f\in C_0^{\infty}(\M)$ and $t>0$, then
\begin{align*}
\frac{\partial}{\partial s} Q_s  \left( \Gamma(Q_{t-s} f) +  \ve \Gamma^Z(Q_{t-s} f)  \right)
   & = 2 Q_s  \left( \Gamma_2(Q_{t-s} f) +  \ve \Gamma_2^Z(Q_{t-s} f)  \right)\\
   &\geq 2 \lambda_\ve  Q_s  \left( \Gamma(Q_{t-s} f) +  \ve \Gamma^Z(Q_{t-s} f)  \right),
\end{align*}
and the result follows by Gronwall Lemma.  

\end{proof}

One then deduces the following:

\begin{corollary}\label{cor:CVL2}
For $f \in C_0^\infty (\M)$, one has $Q_t f \to \int_\M f d\mu$ in $L^2(\M,\mu)$ when $t \to +\infty$.
\end{corollary}

\begin{proof}
From Proposition \ref{prop:dec-gammaZ-exp},
\[
\Gamma(Q_t f) +  \ve \Gamma^Z(Q_t f) \le e^{-2 \lambda_\ve t}  \left( \| \Gamma(f) \|_\infty +  \ve \| \Gamma^Z(f) \|_\infty \right).
\]
Thus, integrating along some  geodesic from $x$ to $y$ for $d_\ve$ gives: 
\[
| Q_t f (x) -Q_t f(y)| \le e^{- \lambda_\ve t}  \left( \| \Gamma(f) \|_\infty +  \ve \| \Gamma^Z(f) \|_\infty \right)^{1/2} d_\ve (x,y).
\]
One deduces
\begin{align*}
\Var_\mu (Q_t f)  & :=\frac{1}{2}  \int_\M \int_\M ( Q_t f (x) -Q_t f(y))^2 d\mu(x) d\mu(y) \\
 & \le  \frac{1}{2} e^{-2 \lambda_\ve t}  \left( \| \Gamma(f) \|_\infty +  \ve \| \Gamma^Z(f) \|_\infty \right)  \int_\M \int_\M d_\ve (x,y)^2 d\mu(x) d\mu(y)\\
 &\le  \frac{D_\ve}{2} e^{-2 \lambda_\ve t}  \left( \| \Gamma(f) \|_\infty +  \ve \| \Gamma^Z(f) \|_\infty \right).
\end{align*}

Noting that  $\Var_\mu (Q_t f) = \Vert Q_t f -\int f d\mu \Vert_2^2$ since $\int Q_t f d\mu =\int f d\mu$, we conclude the proof by letting $t \to +\infty$.
\end{proof}

\subsection{Poincar\'e inequalities and quantitative estimates}

We have the following Poincar\'e inequality for the heat kernel measure:
\begin{proposition} \label{prop:Poinc-Z}
Let  $f \in C_0^\infty (\M)$, one has
\[
Q_t(f^2 )-(Q_t f)^2 \leq \frac{1-e^{-2\lambda_\ve t}} {\lambda_\ve} Q_t\left( \Gamma(f)+ \ve \Gamma^Z (f) \right) 
\]
and thus
\[
\int_\M f^2 d\mu-\left(\int_\M  f d\mu\right)^2 \leq \frac{1} {\lambda_\ve} \int_\M \left( \Gamma(f)+ \ve \Gamma^Z (f) \right) d\mu.
\]
\end{proposition}

\begin{proof}
The proof of the first inequality is given by the standard $\Gamma$-calculus. Indeed, for any $f\in C_0^\infty(\M)$ one has:
\begin{align*}
Q_t(f^2) - (Q_tf)^2 &= \int_0^t \frac{\partial}{\partial s} Q_s \left( (Q_{t-s} f)^2 \right) ds\\
                                     &= 2\int_0^t  Q_s \left( \Gamma (Q_{t-s} f) \right) ds\\
                                      &\leq 2 \int_0^t  Q_s \left( \Gamma (Q_{t-s} f) + \ve \Gamma^Z(Q_{t-s}f)  \right) ds\\
                                         &\leq 2  \int_0^t          e^{-2 \lambda_\ve (t-s)} ds \; Q_t  \left( \Gamma(f) +  \ve \Gamma^Z(f) \right)\\
                                         &= \frac{1-e^{-2\lambda_\ve t}} {\lambda_\ve} Q_t\left( \Gamma(f)+ \ve \Gamma^Z (f) \right) ,
\end{align*}
where we used Proposition \ref{prop:dec-gammaZ-exp} in the last inequality. The second inequality is obtained by letting $t\to +\infty$ and applying Corollary \ref{cor:CVL2}.
\end{proof}

\begin{corollary}\label{cor:dec-integree}
Let $f\in L^2(\M)$ be smooth such that $\Gamma(f), \Gamma^Z(f)\in L^1(\M)$. Then one has
\[
\int_\M (Q_tf)^2 d\mu-\left(\int_\M  f d\mu\right)^2 \leq \frac{e^{-2 \lambda_\ve t}} {\lambda_\ve} \int_\M \left( \Gamma(f)+ \ve \Gamma^Z (f) \right) d\mu.
\]
\end{corollary}
\begin{proof}
By an approximation argument, we may assume that $f\in  C_0^\infty (\M)$.
Applying  the second inequality of Proposition \ref{prop:Poinc-Z} to $Q_tf$ instead of $f$ gives:
\begin{align*}
\int_\M (Q_tf)^2 d\mu-\left(\int_\M  f d\mu\right)^2 
&\leq 
\frac{1} {\lambda_\ve} \int_\M \left( \Gamma(Q_t f)+ \ve \Gamma^Z (Q_t f) \right) d\mu
\\ &\leq 
\frac{e^{-2 \lambda_\ve t}} {\lambda_\ve} \int_\M \left( Q_t \Gamma(f)+ \ve Q_t \Gamma^Z (f) \right) d\mu
\\ &\leq 
\frac{e^{-2 \lambda_\ve t}}{\lambda_\ve} \int_\M \left(\Gamma(f)+ \ve \Gamma^Z (f) \right) d\mu.
\end{align*}
Here in the second inequality we used the exponential decay of Proposition \ref{prop:dec-gammaZ-exp}.
\end{proof}

We now turn to the exponential convergence in $L^2(\mu)$.
\begin{proposition}\label{prop-cv-exp-L2}
Let $f\in L^2(\mu)$.
Let $t\geq \frac {1}{2\lambda_\ve}$ and $C:= e\left( 1+ \frac{2\lambda_\ve\ve}{\rho_2}\right)\left(1+\frac{2\kappa}{\rho_2} \right)$.
Then
\[
 \left\Vert Q_t f -\int_\M f d\mu \right\Vert_{L^2(\mu)}^2 \leq  C  e^{- 2 \lambda_\ve t}   \left\Vert  f -\int_\M f d\mu \right\Vert_{L^2(\mu)}^2
\]
\end{proposition}


Before proceeding with the proof, we recall  the usual generalized curvature-dimension inequality $CD(\rho_1,\rho_2,\kappa,\infty)$ introduced in \cite{BG}:
\[
\Gamma_2(f) +\ve \Gamma_2^Z (f) \ge \left( \rho_1-\frac{\kappa}{\ve}\right) \Gamma(f) +\rho_2  \Gamma^Z(f).
\]
In our framework, $CD(\rho_1,\rho_2,\kappa,\infty)$  also holds since $\rho_3$ in assumption $(\bf{A3})$ is positive. Hence we can apply  the  reverse Poincar\'e and log-Sobolev inequalities obtained in \cite[Propositions 3.1 and 3.2]{BB}. We point out that even if the symmetry of the operator is assumed in  \cite{BB}, the symmetry is not used in the proofs of these propositions which are of a ``local nature''. The reverse Poincar\'e inequality states as follows.

\begin{proposition}[Reverse Poincar\'e inequality]\label{rev-poinc}
Let  $f \in C_0^\infty(\bM)$, then
for $x\in \M$,  $t>0$ one has
\[
 \Gamma(Q_t f)(x)  +\rho_2 t   \Gamma^Z( Q_t f)(x) \le \frac{1}{2t} \left(1+\frac{2\kappa}{\rho_2} \right) \big[Q_t ( f^2 )(x) -Q_t f(x)^2 \big].
\]

\begin{proof}[Proof of proposition \ref{prop-cv-exp-L2}]
By an approximation argument, we may assume that $f\in   C_0^\infty (\M)$.
Let $t\geq s>0$. Applying Corollary \ref{cor:dec-integree}  and the reverse Poincar\'e inequality  in Proposition \ref{rev-poinc} give
\begin{align*}
 \left\Vert Q_t f -\int_\M f d\mu \right\Vert_{L^2(\mu)}^2 &= 
 \int_\M (Q_tf)^2 d\mu-\left(\int_\M  f d\mu\right)^2 \\
&\leq 
\frac{e^{-2 \lambda_\ve (t-s)}} {\lambda_\ve} \int_\M \left( \Gamma(Q_sf)+ \ve \Gamma^Z (Q_sf) \right) d\mu\\
& \leq 
e^{-2 \lambda_\ve t} \frac{e^{2 \lambda_\ve s}} { 2 \lambda_\ve s } \left( 1+ \frac{\ve}{\rho_2s}\right)\left(1+\frac{2\kappa}{\rho_2} \right)  \int_\M \left(Q_s ( f^2 )(x) -Q_sf(x)^2\right) d\mu(x) \\
&  \leq  
e^{-2 \lambda_\ve t} \frac{e^{2 \lambda_\ve s}} { 2 \lambda_\ve s } \left( 1+ \frac{\ve}{\rho_2s}\right)\left(1+\frac{2\kappa}{\rho_2} \right)  \left[\int_\M   f^2 d\mu - \left(\int_\M f d\mu \right)^2 \right],
\end{align*}
where the last inequality follows from the Cauchy-Schwarz inequality.
The result then follows by letting $s=\frac {1}{2\lambda_\ve}$.
\end{proof}

\end{proposition}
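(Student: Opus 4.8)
The plan is to interpolate, via the semigroup property $Q_t=Q_{t-s}\circ Q_s$, between two ingredients that are already available: the integrated exponential decay of the energy (Corollary~\ref{cor:dec-integree}), which is a consequence of the generalized curvature inequality $(\mathbf{A3})$, and the reverse Poincar\'e inequality (Proposition~\ref{rev-poinc}), which is valid here because $(\mathbf{A3})$ with $\rho_3>0$ in particular implies the usual $CD(\rho_1,\rho_2,\kappa,\infty)$. By density it suffices to treat $f\in C_0^\infty(\M)$, general $f\in L^2(\mu)$ being handled by approximation together with the fact that $Q_t$ is an $L^2(\mu)$-contraction (Lemma~\ref{lem:phi} with $\varphi(x)=x^2$). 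Moreover, since $\int_\M Q_tf\,d\mu=\int_\M f\,d\mu$ by Lemma~\ref{invariance semigroup}, the two sides of the claimed inequality are exactly the variances $\int_\M (Q_tf)^2 d\mu-(\int_\M f d\mu)^2$ and $\int_\M f^2 d\mu-(\int_\M f d\mu)^2$, so the whole statement reduces to a variance estimate.

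I would then fix $0<s\le t$ and apply Corollary~\ref{cor:dec-integree} to the function $Q_sf$ over the time interval $t-s$ (using $\int_\M Q_sf\,d\mu=\int_\M f\,d\mu$), which gives
\[
\int_\M (Q_tf)^2 d\mu-\Big(\int_\M f d\mu\Big)^2\le \frac{e^{-2\lambda_\ve(t-s)}}{\lambda_\ve}\int_\M\big(\Gamma(Q_sf)+\ve\,\Gamma^Z(Q_sf)\big)\,d\mu .
\]
To bound the last integrand I would use the pointwise comparison $\Gamma(Q_sf)+\ve\,\Gamma^Z(Q_sf)\le\big(1+\tfrac{\ve}{\rho_2 s}\big)\big(\Gamma(Q_sf)+\rho_2 s\,\Gamma^Z(Q_sf)\big)$, valid since $\Gamma(Q_sf),\Gamma^Z(Q_sf)\ge 0$, followed by the reverse Poincar\'e inequality at time $s$, which controls $\Gamma(Q_sf)+\rho_2 s\,\Gamma^Z(Q_sf)$ by $\tfrac{1}{2s}\big(1+\tfrac{2\kappa}{\rho_2}\big)\big(Q_s(f^2)-(Q_sf)^2\big)$. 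Integrating against $\mu$ and using invariance of $\mu$ together with $\int_\M (Q_sf)^2 d\mu\ge(\int_\M f d\mu)^2$ (Jensen/Cauchy--Schwarz) yields $\int_\M\big(Q_s(f^2)-(Q_sf)^2\big)d\mu\le \int_\M f^2 d\mu-(\int_\M f d\mu)^2$.

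Chaining these estimates produces
\[
\int_\M (Q_tf)^2 d\mu-\Big(\int_\M f d\mu\Big)^2\le e^{-2\lambda_\ve t}\,\frac{e^{2\lambda_\ve s}}{2\lambda_\ve s}\Big(1+\frac{\ve}{\rho_2 s}\Big)\Big(1+\frac{2\kappa}{\rho_2}\Big)\Big[\int_\M f^2 d\mu-\Big(\int_\M f d\mu\Big)^2\Big],
\]
and the last step is to choose the free parameter: taking $s=\frac{1}{2\lambda_\ve}$, which is admissible precisely because $t\ge\frac{1}{2\lambda_\ve}$, makes $e^{2\lambda_\ve s}=e$, $2\lambda_\ve s=1$ and $\frac{\ve}{\rho_2 s}=\frac{2\lambda_\ve\ve}{\rho_2}$, so the prefactor becomes exactly $C=e\big(1+\frac{2\lambda_\ve\ve}{\rho_2}\big)\big(1+\frac{2\kappa}{\rho_2}\big)$, giving the claim. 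I do not expect a genuine obstacle here: the argument is a clean two-time-scale combination, and the non-symmetry of $L$ causes no difficulty because both inputs are insensitive to it (Corollary~\ref{cor:dec-integree} comes from the $\Gamma_2$-inequality, and Proposition~\ref{rev-poinc} is of local nature). The only point requiring care is the bookkeeping that makes the constant come out in the stated closed form, namely the pointwise comparison between $\Gamma+\ve\,\Gamma^Z$ and $\Gamma+\rho_2 s\,\Gamma^Z$ and the specific admissible choice $s=\frac{1}{2\lambda_\ve}$.
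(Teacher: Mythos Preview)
Your proposal is correct and follows essentially the same approach as the paper's proof: both apply Corollary~\ref{cor:dec-integree} to $Q_sf$ over the interval $t-s$, bound $\Gamma(Q_sf)+\ve\,\Gamma^Z(Q_sf)$ via the reverse Poincar\'e inequality, use Cauchy--Schwarz/Jensen to pass from $\int(Q_s(f^2)-(Q_sf)^2)\,d\mu$ to the variance of $f$, and then set $s=\tfrac{1}{2\lambda_\ve}$. You merely spell out explicitly the pointwise comparison $\Gamma+\ve\,\Gamma^Z\le(1+\tfrac{\ve}{\rho_2 s})(\Gamma+\rho_2 s\,\Gamma^Z)$ that the paper leaves implicit between its second and third displayed lines.
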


\section{Entropic convergence, log-Sobolev inequalities and hypercontractivity} \label{sec:entropic}

The study of the entropic convergence to equilibrium is more difficult. In this section,  in order to prove the desired convergence,   we assume the following exponential integrability of the distance:
\[
E_{c_0,d}:=\int_\M \int_\M e^{c_0 d^2(x,y) }d\mu(x) d\mu(y) <+\infty,
\]
for some $c_0>0$.

\subsection{Hypercontractivity and Entropic convergence}

Denote $C_b^{\infty}(\M)=C^{\infty}(\M)\cap L^\infty(\M)$. For $\delta>0$ let $\mathcal A_\delta$ be the set of functions $f\in C_b^{\infty}(\M)$ such that $f=g+\delta$ for some $g \in C_b^{\infty}(\M) $, $g \ge 0$, such that $\Gamma(g), \Gamma^{Z}(g)\in L^1(\mu)$.

We recall first  the reverse log-Sobolev inequality in \cite[Proposition 3.1]{BB}.

\begin{proposition}[Reverse log-Sobolev inequality]\label{prop:reverse_logsob}
Let $\delta>0$ and   $f \in\mathcal A_\delta$. 
 For $x\in \M$,  $t>0$ one has
\[ Q_t f(x) \Gamma(\ln Q_t f)(x)  +\rho_2 t  Q_t f(x) \Gamma^Z(\ln Q_t f)(x) \le \frac{1}{t} \left(1+\frac{2\kappa}{\rho_2}\right) \big[Q_t ( f\ln f )(x) -Q_tf(x)\ln Q_t f(x)\big].
\]
\end{proposition}

As it is now well known, the reverse log-Sobolev inequality implies the Wang Harnack inequality.
This was first observed by F.Y. Wang \cite{W1} in a Riemannian framework and see Proposition 3.4 of \cite{BB} in our framework:

\begin{proposition}\label{prop:wang-harnack}[Wang Harnack  inequality]\\
Let $\alpha>1$. For $f \in L^\infty(\bM)$,   $f \ge 0$, $t>0$, $x,y \in \bM$,
$$
(Q_tf)^\alpha (x) \leq Q_t(f^\alpha) (y) \exp \left(  \frac{\alpha}{\alpha-1} \left(\frac{ 1+\frac{2\kappa}{\rho_2} }{4t}\right) d^2(x,y)\right).
$$
\end{proposition}

The following log-Harnack inequality follows easily from Wang Harnack inequality (see \cite[Proposition 3.5]{BB}).
\begin{proposition}
Let $f \in L^\infty(\bM)$ such that  $\inf f > 0$. Then for $t>0$, $x,y \in \bM$,
$$
Q_t(\ln f) (x) \leq \ln Q_t(f) (y) + \left(\frac{ 1+\frac{2\kappa}{\rho_2} }{4t}\right) d^2(x,y).
$$
\end{proposition}
%

Proposition \ref{prop:wang-harnack} also implies the superconctractivity (also called hyperboundedness).

\begin{proposition} \label{prop:hyperboundedness}[Hyperboundedness]\\
Let $\beta>\alpha >1$. Let  $C:= 1+\frac{2\kappa}{\rho_2}$ and  
\[N_t:= \int_\M \int_\M \exp\left( \frac{\beta}{\alpha-1} \frac{Cd^2(x,y)}{t}\right) d\mu(y)   d\mu (x). \]
Then for $f\in L^{\alpha}(\mu)$
\[
\Vert Q_t f \Vert_\beta \leq N_t^{1/\beta} \Vert  f \Vert_\alpha
\]
with $N_t <+\infty$ for $t>\frac{\beta C}{(\alpha-1) c_0}$ and $N_t\to 1$ when $t\to +\infty$. 
\end{proposition}

\begin{proof}
First observe that  $N_t\le E_{c_0,d} <+\infty$ for $t>\frac{\beta C}{(\alpha-1) c_0}$ and $N_t\to 1$ when $t\to +\infty$.

Now take $f\ge 0$ such that $\int f^\alpha d\mu=1$. From Proposition \ref{prop:wang-harnack}, dividing first and then taking integral with respect to $y$ yields,
\[
(Q_t f)^\alpha (x) \int_\M \exp\left( -\frac{\alpha}{\alpha-1} \frac{Cd^2(x,y)}{t} \right)d\mu (y)  \leq \int_\M Q_t( f^\alpha) (y) d\mu(y)=1
\]

Now for $\beta>\alpha>1$ and $t>\frac{\beta C}{(\alpha-1) c_0}$, integrating in $x$ gives
\begin{align*}
\int_\M (Q_t f)^\beta (x ) d\mu(x) & \leq  \int_\M \frac{1} { \left(\int_\M   \exp\left( - \frac{\alpha}{\alpha-1} \frac{C d^2(x,y)}{t}\right) d\mu(y)  \right)^{\beta/\alpha} } d\mu (x) \\
                          &  \leq  \int_\M  \left(\int_\M  \exp\left( \frac{\alpha}{\alpha-1} \frac{Cd^2(x,y)}{t}\right) d\mu(y) \right)^{\beta/\alpha} d\mu (x) \\
                          &  \leq  \int_\M \int_\M  \exp\left( \frac{\beta}{\alpha-1} \frac{Cd^2(x,y)}{t}\right) d\mu(y)   d\mu (x),
\end{align*}
where the second inequality follows from the fact that
\[
1 \leq \int_\M \frac{1}{g} d\mu  \int_\M g d\mu. 
\]
The above estimate writes
\[
\Vert Q_t f \Vert_\beta \leq N_t^{1/\beta} \Vert  f \Vert_\alpha
\]
and we conclude the proof. 
\end{proof}

As noticed by F.Y. Wang \cite{W4}, the above hyperboundedness property implies  the following entropic  convergence:
\begin{proposition} \label{prop:entropic-CV}
There exist $C,\theta >0$ such that for all $f\in L^1(\M)$, $f>0$, and all $t>0$
\begin{equation}\label{e:cv-entr}
\Ent_\mu( Q_t f) \leq C e^{-\theta t} \Ent_\mu (f).
\end{equation}
\end{proposition}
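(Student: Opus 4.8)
The plan is to deduce the entropic convergence \eqref{e:cv-entr} from the hyperboundedness property of Proposition \ref{prop:hyperboundedness} together with the entropy-decay estimate of Lemma \ref{lem:phi}, following the scheme of F.-Y. Wang. First I would fix $t_0>0$ large enough that the hyperbound is nontrivial: by Proposition \ref{prop:hyperboundedness}, for suitable $\beta>\alpha>1$ there is $t_0$ with $N:=N_{t_0}<\infty$ and moreover one can arrange $N^{1/\beta}\le 2$ (or any fixed constant), so that $\|Q_{t_0}g\|_\beta \le N^{1/\beta}\|g\|_\alpha$ for all $g$. The key point is the classical fact that a hyperbound $\|Q_{t_0}\|_{\alpha\to\beta}<\infty$ together with the $L^1$-contractivity of $Q_t$ (which we have, since $\mu$ is invariant) yields a \emph{defective} or even tight log-Sobolev-type bound, and hence exponential decay of entropy along the semigroup. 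Concretely, for a fixed probability density $f$, one considers $u(t)=\Ent_\mu(Q_tf)$ and shows $u(t_0+s)\le \kappa\, u(s)$ for some $\kappa<1$ and all $s\ge 0$, from which \eqref{e:cv-entr} follows by iteration: writing $t=kt_0+r$ with $0\le r<t_0$, one gets $u(t)\le \kappa^k u(r)\le \kappa^k u(0)$ by Lemma \ref{lem:phi}, and $\kappa^k = e^{k\ln\kappa}\le e^{-\theta t}e^{\theta t_0}$ with $\theta=-\frac{1}{t_0}\ln\kappa>0$, giving $C=e^{\theta t_0}/\kappa$ or similar.

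The step where the real work lies is establishing the \emph{entropy contraction over the time $t_0$}, i.e. $\Ent_\mu(Q_{t_0}f)\le \kappa\,\Ent_\mu(f)$ with $\kappa<1$. Here I would use the standard interpolation argument: by Lemma \ref{lem:phi} the entropy is nonincreasing, so it suffices to get any strict improvement. One route, due to Wang, is to use hyperboundedness to upgrade to a bound of the form $\Ent_\mu(g)\le A\,\calE(g,g) + B\,\Vert g\Vert_1\ln(\,\Vert g\Vert_\alpha/\Vert g\Vert_1)$ — but since we are working with a possibly non-symmetric operator and only have an ``elliptic gradient'' Poincaré/log-Sobolev, a cleaner path is the following. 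Normalize $\int f\,d\mu=1$. Since $Q_{t_0}$ is bounded from $L^\alpha(\mu)$ to $L^\beta(\mu)$ with norm $N^{1/\beta}$, and since $\Ent_\mu(h)\le \frac{\beta}{\beta-\alpha}\ln\frac{\Vert h\Vert_\beta}{\Vert h\Vert_\alpha}$ fails in general but $\ln\Vert h\Vert_p$ is convex in $p$ so $\frac{d}{dp}\big|_{p=1}\ln\Vert h\Vert_p = \Ent_\mu(h)$ when $\Vert h\Vert_1=1$, one interpolates the three exponents $1<\alpha<\beta$: convexity of $p\mapsto \ln\Vert Q_{t_0}f\Vert_p$ together with $\Vert Q_{t_0}f\Vert_1\le 1$ and $\Vert Q_{t_0}f\Vert_\beta \le N^{1/\beta}\Vert f\Vert_\alpha$ gives a quantitative bound on $\Ent_\mu(Q_{t_0}f)$ in terms of $\ln\Vert f\Vert_\alpha$ and hence, applying the same idea to $f$ itself (now at exponents near $1$), in terms of $\Ent_\mu(f)$ plus a controllable remainder; choosing $t_0$ large makes $N$ close to $1$ and drives the contraction constant below $1$.

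More precisely, I would run it as follows. Fix the density $f\ge 0$ with $\int f\,d\mu=1$. For $p\in[1,\beta]$ set $\Lambda(p)=\ln\Vert Q_{t_0}f\Vert_p$; this is convex in $p$, $\Lambda(1)\le 0$ (by $L^1$-contraction), $\Lambda'(1^+)=\Ent_\mu(Q_{t_0}f)$ (if $\Vert Q_{t_0}f\Vert_1=1$; otherwise a harmless correction term appears), and $\Lambda(\beta)\le \frac1\beta\ln N + \ln\Vert f\Vert_\alpha$. Convexity between $p=1$ and $p=\beta$ yields
\[
\Ent_\mu(Q_{t_0}f)=\Lambda'(1^+)\le \frac{\Lambda(\beta)-\Lambda(1)}{\beta-1}\le \frac{1}{\beta-1}\Big(\tfrac1\beta\ln N + \ln\Vert f\Vert_\alpha\Big).
\]
Now apply the analogous convexity estimate to $f$ on the exponent interval $[1,\alpha]$: $\ln\Vert f\Vert_\alpha \le (\alpha-1)\,\ln\Vert f\Vert_{\alpha}/(\alpha-1)$ is circular, so instead bound $\ln\Vert f\Vert_\alpha$ by the entropy at a nearby exponent — using $\frac{d}{dp}\ln\Vert f\Vert_p\le \frac{1}{p^2}\Ent_\mu(f^p/\Vert f\Vert_p^p)\cdots$; the clean statement one actually wants is $\ln\Vert f\Vert_\alpha \le \frac{\alpha-1}{\alpha}\,\Ent_\mu(f) + \ldots$, obtained by noting $\ln\Vert f\Vert_\alpha = \frac1\alpha\ln\int f^\alpha d\mu$ and $\int f^\alpha d\mu \le$ (Jensen/Young applied to $f\cdot f^{\alpha-1}$) — this is the standard ``$\Vert f\Vert_\alpha^\alpha \le e^{(\alpha-1)\Ent_\mu(f)}$ when $\int f\,d\mu=1$'' inequality, which follows from the variational formula for entropy. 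Combining, $\Ent_\mu(Q_{t_0}f)\le \frac{\alpha-1}{\alpha(\beta-1)}\Ent_\mu(f) + \frac{\ln N}{\beta(\beta-1)}$, and since $\ln N\to 0$ as $t_0\to\infty$ while the coefficient $\frac{\alpha-1}{\alpha(\beta-1)}$ can be made $<1$ by choosing $\beta$ large, we may absorb the constant term (e.g. by first handling the case $\Ent_\mu(f)$ large and using Lemma \ref{lem:phi} for the rest, or simply accepting $C>1$) to obtain $\Ent_\mu(Q_{t_0}f)\le \kappa\,\Ent_\mu(f)$ with $\kappa<1$. The main obstacle is purely bookkeeping: tracking the remainder term in the three-exponent interpolation so that it can genuinely be absorbed, and handling the normalization $\Vert Q_{t_0}f\Vert_1=\int f\,d\mu$ which requires $f>0$ (as assumed) so that $f/\int f\,d\mu$ is a probability density; there is no essential difficulty beyond this, and crucially \emph{no symmetry of $L$ is needed}, only the $L^1$-invariance from Lemma \ref{invariance semigroup} and the hyperbound from Proposition \ref{prop:hyperboundedness}.
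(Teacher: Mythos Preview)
Your overall architecture --- hyperboundedness $\Rightarrow$ entropy contraction at some fixed time $t_0$ $\Rightarrow$ iterate using the monotonicity from Lemma \ref{lem:phi} --- is exactly the scheme the paper follows. The iteration step is fine. The gap is in the middle step, where you try to pass directly from the hyperbound to an entropy contraction.

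The specific inequality you invoke, ``$\Vert f\Vert_\alpha^\alpha \le e^{(\alpha-1)\Ent_\mu(f)}$ when $\int f\,d\mu=1$'', is false: the convexity of $p\mapsto \ln\int f^p\,d\mu$ (which is what the variational formula gives) yields the \emph{reverse} inequality $\ln\int f^\alpha\,d\mu \ge (\alpha-1)\Ent_\mu(f)$. In fact no bound of the form $\ln\Vert f\Vert_\alpha \le C\,\Ent_\mu(f)$ can hold for all probability densities: take $f=(1-\varepsilon)+\varepsilon\,\delta^{-1}\mathbf 1_A$ with $\mu(A)=\delta$ and send $\delta\to 0$ with $\varepsilon$ fixed small; then $\Ent_\mu(f)$ stays bounded while $\Vert f\Vert_\alpha\to\infty$ for any $\alpha>1$. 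So your chain of inequalities breaks precisely at the point where you try to control $\ln\Vert f\Vert_\alpha$ by $\Ent_\mu(f)$. What your interpolation \emph{does} legitimately produce (after correcting the convexity to be in $1/p$, which costs only a factor $\beta$) is a \emph{defective} bound
\[
\Ent_\mu(Q_{t_0}f)\ \le\ \kappa\,\Ent_\mu(f)\ +\ c(t_0),\qquad \kappa<1,\ c(t_0)>0,
\]
and this does not iterate to exponential decay: iterating gives only $\Ent_\mu(Q_{nt_0}f)\le \kappa^n\Ent_\mu(f)+\frac{c}{1-\kappa}$, which does not even tend to $0$. Neither of your suggested fixes (``handle $\Ent_\mu(f)$ large separately'' or ``accept $C>1$'') resolves this, since for $\Ent_\mu(f)$ comparable to $c/(1-\kappa)$ the inequality gives no decay at all.

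The paper closes this gap by inserting an extra step, due to Wang: from the hyperbound $\Vert Q_T\Vert_{2\to 4}\le K<2^{1/4}$ one first extracts an $L^2$ contraction $\Vert Q_Tf-\mu(f)\Vert_2\le M<1$, and uses this to upgrade hyperboundedness to genuine \emph{hypercontractivity}, i.e.\ $\Vert Q_{T_2}\Vert_{2\to 4}\le 1$ for some larger $T_2$. Only then does one run Riesz--Thorin, interpolating the \emph{operator} $Q_{T_2}$ between $L^1\to L^1$ (norm $\le 1$) and $L^2\to L^4$ (norm $\le 1$); differentiating the resulting family $\Vert Q_{T_2}\Vert_{p_\theta\to q_\theta}\le 1$ at $\theta=0$ gives $\Ent_\mu(Q_{T_2}f)\le\frac23\Ent_\mu(f)$ with no additive constant, because both endpoint norms are $\le 1$. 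That is the missing idea: the additive defect vanishes exactly when the hyperbounded norm is $\le 1$, and getting there requires the detour through the $L^2$ spectral gap.
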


\begin{proof}
By Proposition \ref{prop:hyperboundedness},  for $T$ big enough one has
\[
\Vert Q_T f \Vert_{L^4(\mu)} \leq K\Vert  f \Vert_{L^2(\mu)}
\]
with \[
K< 2^{1/4}. \]

Denote $\mu(f)=\int_\M f d\mu$. According to F.Y.  Wang  in \cite[Proposition 2.2]{W4}, the above implies that for any $f\in L^2(\mu)$ with $\mu(f^2)\le1$,
\begin{equation}\label{e:cvL2}
\Vert Q_Tf-\mu(f) \Vert_{L^2(\mu)} \leq M<1
\end{equation}
and that  for $T_2$ big enough $Q_{T_2}$ is hypercontractive in the sense:
\begin{equation}\label{e:hyperc}
\Vert Q_{T_2} f \Vert_{L^4(\mu)} \leq  \Vert  f \Vert_{L^2(\mu)}.
\end{equation}

Now  the Riesz-Thorin interpolation theorem   implies the following entropy decay   (see e.g. Proposition 2.3 of Wang  \cite{W4}): for $f>0$   with $\mu(f)=1$,
\begin{equation}\label{eq:Ent decay}
\mu(Q_ {T_2} f \ln Q_{T_2} f) \leq \frac{2}{3} \mu(f \ln f).
\end{equation}
Iterating \eqref{eq:Ent decay} and applying Lemma \ref{lem:phi}
imply the following entropic convergence: for any $t>0$,
\[
\Ent_\mu( Q_t f) \leq  \frac{3}{2} \left(\frac{2}{3} \right)^\frac{t}{T_2} \Ent_\mu (f).
\]
Hence we conclude the proof.
\end{proof}

\begin{remark}
One  notes that inequality \eqref{e:cvL2}, together with  Lemma \ref{lem:phi}, implies the following $L^2$ convergence:
\begin{equation}\label{e:cvL2-bis}
\Vert Q_t f-\mu(f) \Vert_{L^2(\mu)} \leq C \exp(-\theta t) \Vert  f-\mu(f) \Vert_{L^2(\mu)}
\end{equation}
for some $C$ and $\theta>0$.
\end{remark}

\subsection{Log-Sobolev inequalities and quantitative estimates}
We can now provide some quantitative estimates.
Recall that the relation
\begin{equation}\label{e:GammaZ}
\Gamma (f, \Gamma^Z (f,f))= \Gamma^Z (f, \Gamma (f,f))
\end{equation}
is satisfied. The ``standard'' $\Gamma$-calculus applies.
In particular, the curvature criterion \eqref{e:Gamma2complet} implies:
\begin{proposition}\label{prop:dec-gammaZ-ln}
Let $\delta>0$ and $f\in A_{\delta}$.  Then for $t>0$,
\[
Q_t f \left(\Gamma( \ln Q_t f) +  \ve \Gamma^Z(\ln Q_t f)\right)) \le e^{-2 \lambda_\ve t} Q_t \left( f(\Gamma(\ln f) +  \ve \Gamma^Z( \ln f)) \right)
\]
\end{proposition}
\begin{proof}
Let $f\in \mathcal A_{\delta}$. Denote $g(x,s)=Q_{t-s}f(x)$. By the chain rule, one has $L \ln g=Lg/g-\Gamma(\ln g)$. Thus, proceeding as before, (rigorous justification is identical)
\begin{align*}
&\frac{\partial}{\partial s} Q_s (Q_{t-s} f\Gamma(\ln Q_{t-s} f))
\\&\qquad=Q_sL(g \Gamma(\ln g))-Q_s(Lg\Gamma(\ln g))-2Q_s(g\Gamma(\ln g, Lg/g))
\\ &\qquad=
Q_s(gL \Gamma(\ln g))+2Q_s(\Gamma(g,\Gamma(\ln g)))- 2Q_s(g\Gamma(\ln g, Lg+\Gamma(\ln g)))
\\ &\qquad=
2Q_s(g \Gamma_2(\ln g)).
\end{align*}

Similarly, since  the condition \eqref{e:GammaZ} holds one has
\begin{align*}
&\frac{\partial}{\partial s} Q_s (Q_{t-s} f\Gamma^Z(\ln Q_{t-s} f))
\\&\qquad=Q_sL(g \Gamma^Z(\ln g))-Q_s(Lg\Gamma^Z(\ln g))-2Q_s(g\Gamma^Z(\ln g, Lg/g))
\\ &\qquad=
Q_s(gL \Gamma^Z(\ln g))+2Q_s(\Gamma(g,\Gamma^Z(\ln g)))- 2Q_s(g\Gamma(\ln g, Lg+\Gamma^Z(\ln g)))
\\ &\qquad=
Q_s(gL \Gamma^Z(\ln g))+2Q_s(\Gamma^Z(g,\Gamma(\ln g)))- 2Q_s(g\Gamma(\ln g, Lg+\Gamma^Z(\ln g)))
\\ &\qquad=
2Q_s(g \Gamma_2^Z(\ln g)).
\end{align*}

Combining the above two equalities yields
\begin{align*}
&\frac{\partial}{\partial s} Q_s  \left( Q_{t-s} f (\Gamma(\ln Q_{t-s} f) +  \ve \Gamma^Z(\ln Q_{t-s} f) ) \right)
\\&\qquad \quad=2 Q_s  \left( Q_{t-s} f (\Gamma_2(\ln Q_{t-s} f) +  \ve \Gamma_2^Z(\ln Q_{t-s} f) ) \right)
\\&\qquad \quad \geq 2 \lambda_\ve  Q_s  \left( Q_{t-s} f (\Gamma(\ln Q_{t-s} f) +  \ve \Gamma^Z(\ln Q_{t-s} f) ) \right).
\end{align*}
The  claim then follows from the Gronwall Lemma.
\end{proof}

\begin{proposition}\label{prop:LogS-Z}
Let $f\in C_{0}^{\infty}(\M)$, $f>0$ and $t>0$. Then
\begin{equation}\label{eq:QLogS-Z}
Q_t(f \ln f )-Q_t f \ln (Q_t f) \leq \frac{1-e^{-2\lambda_\ve t}} {2 \lambda_\ve} Q_t\left( \frac{\Gamma(f)}{f} + \ve \frac{\Gamma^Z(f)}{f} \right) 
\end{equation}
and thus
\begin{equation}\label{eq:LogS-Z}
\int_\M f \ln f d\mu-\int_\M fd\mu   \ln \left(\int_\M f d\mu \right) \leq \frac{1} {2 \lambda_\ve} \int_\M \left( \frac{\Gamma(f)}{f} + \ve \frac{\Gamma^Z(f)}{f} \right) d\mu.
\end{equation}
\end{proposition}

\begin{proof}
The proof of the first inequality is given by the standard $\Gamma$-calculus and is similar to the proof of Proposition \ref{prop:Poinc-Z}. Indeed, for any $\delta>0$ let $g\in \mathcal A_{\delta}$. Then from the chain rule,
\begin{align*}
\frac{\partial}{\partial s} Q_s (Q_{t-s}g\ln Q_{t-s} g)
&=LQ_s(Q_{t-s} g \ln (Q_{t-s} g))-Q_s(LQ_{t-s} g\ln (Q_{t-s} g))-Q_s(LQ_{t-s} g)
\\ &=
Q_sL(Q_{t-s} g \ln (Q_{t-s} g))-Q_s(LQ_{t-s} g\ln (Q_{t-s} g))-Q_s(LQ_{t-s} g)
\\ &=
Q_s \left(Q_{t-s}g \Gamma (\ln Q_{t-s} g) \right).
\end{align*}
Therefore one has:
\begin{align*}
Q_t(g \ln g ) - (Q_tg) \ln (Q_t g) &= \int_0^t \frac{\partial}{\partial s} Q_s \left( Q_{t-s} g \ln (Q_{t-s} g) \right) ds\\
                                     &= \int_0^t  Q_s \left(Q_{t-s}g \Gamma (\ln Q_{t-s} g) \right) ds\\
                                      &\leq  \int_0^t  Q_s \left( Q_{t-s}g\left( \Gamma (\ln Q_{t-s} g) + \ve \Gamma^Z(\ln Q_{t-s}g)  \right) \right) ds\\
                                         &\leq   \int_0^t          e^{-2 \lambda_\ve (t-s)} ds \; Q_t  \left(g (\Gamma(\ln g) +  \ve \Gamma^Z(\ln g)) \right)\\
                                         &= \frac{1-e^{-2\lambda_\ve t}} {2\lambda_\ve}  Q_t  \left(g (\Gamma(\ln g) +  \ve \Gamma^Z(\ln g)) \right),
\end{align*}
where we used Proposition \ref{prop:dec-gammaZ-ln} in the last inequality. Note that, as in Lemma \ref{lem:phi},  the computation of the above derivative only requires the diffusion property of the generator and not its  symmetry. 

Now for any $f\in C_{0}^{\infty}(\M)$ such that $f>0$, consider $g=f+\delta\in \mathcal A_{\delta}$. Letting $\delta\to 0$, the previous estimate then yields \eqref{eq:QLogS-Z}.
Letting $t\to +\infty$ in \eqref{eq:QLogS-Z} and applying Corollary \ref{prop:entropic-CV}, we conclude the second inequality \eqref{eq:LogS-Z}.
\end{proof}

\begin{corollary}\label{cor:dec-ent-integree}
Let $f\in C_{0}^{\infty}(\M)$, $f>0$ and $t>0$. Then
\[
\Ent_\mu (Q_tf) \leq \frac{e^{-2 \lambda_\ve t}} {2\lambda_\ve} \int_\M \left( \frac{\Gamma(f)}{f} + \ve \frac{\Gamma^Z(f)}{f} \right) d\mu.
\]
\end{corollary}
\begin{proof}
Applying  \eqref{eq:LogS-Z} in Proposition \ref{prop:LogS-Z} to $Q_tf$ instead of $f$ gives:
\[
\Ent_\mu (Q_tf) \leq \frac{1} {2\lambda_\ve} \int_\M \left( Q_t f \Gamma(\ln Q_t f)+ \ve Q_t f\Gamma^Z (\ln Q_t f) \right) d\mu.
\]
Then Proposition \ref{prop:dec-gammaZ-ln} concludes the proof.
\end{proof}

By arguing as in Proposition \ref{prop-cv-exp-L2}, we obtain the following  entropic convergence.
\begin{proposition}\label{prop-cv-entropic}
Let $f\in C_0^{\infty}(\M)$ such that $f>0$.
Let $t\geq \frac {1}{2\lambda_\ve}$ and $C:= e\left( 1+ \frac{2\lambda_\ve\ve}{\rho_2}\right)\left(1+\frac{2\kappa}{\rho_2} \right)$.
Then
\[
 \Ent_\mu (Q_t f) \leq  C  e^{-2 \lambda_\ve t} \Ent_\mu ( f).
\]
\end{proposition}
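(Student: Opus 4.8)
The plan is to mirror the proof of Proposition \ref{prop-cv-exp-L2}, replacing the $L^2$-variance by the entropy and using the log-Sobolev machinery in place of the Poincar\'e machinery. By an approximation argument we may assume $f \in C_0^\infty(\M)$ with $f>0$, and after rescaling we may also assume $\int_\M f \, d\mu = 1$, so that $\Ent_\mu(f) = \int_\M f \ln f \, d\mu$ and $\Ent_\mu(Q_t f) = \int_\M Q_t f \ln Q_t f \, d\mu$ since $\int_\M Q_t f \, d\mu = 1$ by Lemma \ref{invariance semigroup}. Fix $t \geq s > 0$ (with $s$ to be chosen at the end). First I would apply Corollary \ref{cor:dec-ent-integree} with $Q_s f$ in place of $f$, which gives
\[
\Ent_\mu(Q_t f) = \Ent_\mu(Q_{t-s}(Q_s f)) \leq \frac{e^{-2\lambda_\ve(t-s)}}{2\lambda_\ve} \int_\M \left( \frac{\Gamma(Q_s f)}{Q_s f} + \ve \frac{\Gamma^Z(Q_s f)}{Q_s f} \right) d\mu.
\]

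Next I would bound the integrand using the reverse log-Sobolev inequality of Proposition \ref{prop:reverse_logsob}. Writing $\Gamma(Q_s f)/Q_s f = Q_s f \, \Gamma(\ln Q_s f)$ and similarly for $\Gamma^Z$, and noting that $\rho_2 s \cdot Q_s f\, \Gamma^Z(\ln Q_s f) \geq 0$, I can insert a factor to match the left-hand side of Proposition \ref{prop:reverse_logsob}: for $\ve$ and $s$ related appropriately one has
\[
Q_s f\, \Gamma(\ln Q_s f) + \ve\, Q_s f\, \Gamma^Z(\ln Q_s f) \leq \left(1 + \frac{\ve}{\rho_2 s}\right)\left( Q_s f\, \Gamma(\ln Q_s f) + \rho_2 s\, Q_s f\, \Gamma^Z(\ln Q_s f)\right),
\]
and the right-hand side is at most $\left(1+\frac{\ve}{\rho_2 s}\right)\frac{1}{s}\left(1+\frac{2\kappa}{\rho_2}\right)\big[Q_s(f\ln f) - Q_s f \ln Q_s f\big]$ pointwise. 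Integrating against $\mu$ and using $\int_\M Q_s(f\ln f)\, d\mu = \int_\M f \ln f\, d\mu$ together with $\int_\M Q_s f \ln Q_s f\, d\mu \geq 0$ (Jensen, since $\int Q_s f\, d\mu = 1$), I obtain
\[
\int_\M \left( \frac{\Gamma(Q_s f)}{Q_s f} + \ve \frac{\Gamma^Z(Q_s f)}{Q_s f} \right) d\mu \leq \left(1 + \frac{\ve}{\rho_2 s}\right)\frac{1}{s}\left(1+\frac{2\kappa}{\rho_2}\right) \Ent_\mu(f).
\]

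Combining the two displays gives
\[
\Ent_\mu(Q_t f) \leq e^{-2\lambda_\ve t}\, \frac{e^{2\lambda_\ve s}}{2\lambda_\ve s}\left(1 + \frac{\ve}{\rho_2 s}\right)\left(1+\frac{2\kappa}{\rho_2}\right) \Ent_\mu(f),
\]
and choosing $s = \frac{1}{2\lambda_\ve}$ makes the prefactor equal to $e\left(1 + \frac{2\lambda_\ve \ve}{\rho_2}\right)\left(1+\frac{2\kappa}{\rho_2}\right) = C$, which is exactly the claimed bound; the hypothesis $t \geq \frac{1}{2\lambda_\ve} = s$ is what makes $t - s \geq 0$ so that Corollary \ref{cor:dec-ent-integree} applies. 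The main point requiring care — rather than a genuine obstacle — is the bookkeeping that $\Ent$ is defined without the normalization $\int f \, d\mu = 1$, so I should either reduce to that case at the start (entropy is invariant under $f \mapsto f/\mu(f)$) or carry the $\int_\M f\, d\mu$ factors through; and one must check that all the $\Gamma$-calculus identities used (reverse log-Sobolev, Corollary \ref{cor:dec-ent-integree}) are legitimately applicable to $Q_s f$, which lies in $\mathcal A_\delta$-type classes by the smoothing and Lipschitz properties already established in Proposition \ref{prop:dec-gammaZ-exp}.
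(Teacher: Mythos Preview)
Your proof is correct and follows essentially the same route as the paper's: apply Corollary \ref{cor:dec-ent-integree} to $Q_s f$ at time $t-s$, bound the resulting gradient integral via the reverse log-Sobolev inequality of Proposition \ref{prop:reverse_logsob} (absorbing the $\ve\,\Gamma^Z$ term with the factor $1+\ve/(\rho_2 s)$), pass from $\int Q_s(f\ln f)-Q_sf\ln Q_sf\,d\mu$ to $\Ent_\mu(f)$ by Jensen, and finally set $s=1/(2\lambda_\ve)$. The only cosmetic difference is that you normalize $\int f\,d\mu=1$ at the outset, whereas the paper keeps the general form of the entropy throughout.
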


\begin{proof}
Let $t\geq s>0$.  Applying first Corollary \ref{cor:dec-ent-integree}  and then Proposition \ref{prop:reverse_logsob}, one has for $f>0$ 
\begin{align*}
\Ent_\mu (Q_t f) 
&\leq \frac{e^{-2 \lambda_\ve (t-s)}} {2\lambda_\ve}\int_\M \left( Q_s f \Gamma(\ln Q_s f)+ \ve Q_s f\Gamma^Z (\ln Q_s f) \right) d\mu
\\  & \leq 
e^{-2 \lambda_\ve t} \frac{e^{2 \lambda_\ve s}} { 2 \lambda_\ve s } \left( 1+ \frac{\ve}{\rho_2s}\right)\left(1+\frac{2\kappa}{\rho_2} \right)  \int_\M \left(Q_s( f\ln f ) -Q_sf\ln Q_sf \right)d\mu 
\\& \leq 
e^{-2 \lambda_\ve t} \frac{e^{2 \lambda_\ve s}} { 2 \lambda_\ve s } \left( 1+ \frac{\ve}{\rho_2 s}\right)\left(1+\frac{2\kappa}{\rho_2} \right) \left( \int_\M  f\ln f  -   \int_\M f d\mu \ln\left(\int_\M f d\mu\right) \right),
\end{align*}
where the last inequality follows from Jensen's inequality.
We conclude the proof by taking $s=\frac {1}{2\lambda_\ve}$.
\end{proof}
\bibliographystyle{plain}
 \bibliography{OU_Refs}

\end{document}